\newcommand{\lfrac}[2]{\genfrac{}{}{0pt}{}{#1}{#2}}
\theoremstyle{plain}
\newtheorem{theorem}{Theorem}[section]
\newtheorem{lemma}[theorem]{Lemma}
\newtheorem{example}{Example}
\newtheorem{Remark}{Remark}
\theoremstyle{remark}
\begin{document}

\begin{frontmatter}
\title{Multiscaling limit theorems for stochastic FPDE with cyclic long-range dependence}
\runtitle{Multiscaling limit theorems for stochastic FPDE}

\begin{aug}

\author[A,B]{\inits{F.}\fnms{Maha Mosaad A}~\snm{Alghamdi}\ead[label=e0]{20312612@students.latrobe.edu.au}\ead[label=e1]{mmghamdi@iau.edu.sa}\orcid{0009-0004-6663-2286}},
  \author[C]{\inits{T.}\fnms{Nikolai}~\snm{Leonenko}\ead[label=e2]{LeonenkoN@cardiff.ac.uk}\orcid{0000-0003-1932-4091}},
\author[A]{\inits{S.}\fnms{Andriy}~\snm{Olenko}\ead[label=e3]{a.olenko@latrobe.edu.au}\orcid{0000-0002-0917-7000}}

\address[A]{Department of Mathematical and Physical Sciences,
  La Trobe University, Melbourne, Australia
  \printead[presep={,\ }]{e0,e3}}

\address[B]{Department of Mathematics, College of Science and Humanities,
  Imam Abdulrahman Bin Faisal University, Jubail 31441, Saudi Arabia
  \printead[presep={,\ }]{e1}}

\address[C]{School of Mathematics,
  Cardiff University, Senghennydd Road, Cardiff CF24 4YH, UK
  \printead[presep={,\ }]{e2}}
\end{aug}

\begin{abstract}
The paper studies solutions of stochastic partial
differential equations with random initial conditions. First, it overviews some of the known results on scaled solutions of such equations and provides several explicit motivating examples. Then, it proves multiscaling limit theorems for renormalized solutions for the case of initial conditions subordinated to random processes with cyclic long-range dependence. Two cases of stochastic partial differential equations are examined. The spectral and covariance representations for the corresponding limit random fields are derived. Additionally, it is discussed why analogous results are not valid for subordinated cases with Hermite ranks greater than~1.  Numerical examples that illustrate the obtained theoretical results are presented.
\end{abstract}

\begin{keyword}
\kwd{Fractional PDE, Stochastic PDE, Limit theorems, Cyclic long-memory, Random field, Caputo derivative}
\end{keyword}
\end{frontmatter}

\section{Introduction}
The heat equation with random initial conditions is a well-studied topic in both mathematical and physical literature.
 \cite{de1956random} and \cite{rosenblatt1968remarks} introduced rigorous probabilistic methods to the heat equation with stationary initial conditions and gave the spectrum representations of stationary solutions as stochastic integrals.
Heat equation solutions have been studied under various random initial conditions, including non-homogeneous cases with random potentials, see, for example, \cite{becus1980variational, uboe1995stability}.

 Random fields, which are often obtained as solutions of such stochastic partial differential equations (SPDE), are widely used for modelling in numerous applications, for instance,  earth sciences (\cite{CG, CG1}), geophysics (\cite{Fisher}),
    climatology (\cite{Oh}) and cosmology,  see
    \cite{BKLO, BKLO1, Cabella,   M19, MP} and the references therein. Applications of stochastic methods in
   such areas have become increasingly important due to the enormous experimental data obtained in recent years, see~\cite{Adam}. Studying the evolution and properties of such random fields is of particular interest for the modelling and analysis of Cosmic Microwave Background radiation.  Several models were recently introduced in \cite{BKLO, BKLO1, LNO}, where SPDEs were used to describe changes in those random fields over time.

 Random fields with singular spectra appear in various problems, including rescaling linear diffusion equations with singular initial conditions, see \cite{ Anh1999NonGaussianSF, ANH2000239, Anh2002RenormalizationAH, Leonenko1998ScalingLO}. Several researchers have studied the Burgers equation with random data and the Cole-Hopf transformation, relating it to the heat equation, see \cite{Leonenko1998ExactPA, Leonenko1999LimitTF, Woyczynski1998BurgersKPZT}. \cite{beghin2000gaussian} studied scaling laws for linear Korteweg-de Vries or Airy equations using random data. \cite{ANH2000239, Anh2002RenormalizationAH} proposed the theory for the renormalization and homogenization of fractional-in-time or in-space diffusion equations using random data. See also \cite{10.1214/EJP.v16-896} and the references therein.

\cite{Anh1999NonGaussianSF, ANH2000239, Anh2002RenormalizationAH} and  \cite{ruiz2001scaling} investigated fractional-in-time and fractional-in-space diffusion equations. They propose new Gaussian and non-Gaussian solutions for the renormalized fractional diffusion equation with random data. Their results are similar to non-Gaussian central limit theorems for solving generalized kinetic equations with singular data, as seen in \cite{dobrushin1979non, taqqu1979convergence, anh2001spectral, Anh2002RenormalizationAH} and references therein.

This paper generalises the mentioned results in several directions, especially in the case when the initial condition random process is subordinated to the random process with cyclic long-range dependence. As was discussed in \cite{Ivanov2013, Olenko2013}, cyclic long-range dependence with spectral singularities at non-zero frequencies leads to non-standard asymptotics. The limit theorems for such scenarios are derived for rescaled solutions and the covariance structures of the limit processes are given. The principal technical challenge in the proofs arises from the presence of singularities at non-zero frequencies, which precludes the use of the standard approach. In contrast to the case of a singularity at zero frequency, which remains unchanged under rescaling, the singularity locations in the considered scenario exhibit a different behaviour. As the rescaling factor decreases to zero, the singularity locations shift towards infinity. Therefore, an integrable upper bound does not exist, making it impossible to apply the standard asymptotic methods based on the classical dominated convergence theorems. This necessitates careful modification of the standard asymptotic approach. It is interesting that in contrast to the stationary limits discussed in the previous publications, the limit field exhibits stationarity in the spatial domain, but lacks stationarity in the temporal domain. Another evidence indicating that the case under consideration is more difficult and complex than the established scenarios is that, for subordinate cases with Hermite ranks greater than 1, a limit does not exist, contrary to known results in the literature. Specifically, there is no appropriate normalization for the scaled solutions to achieve non-degenerate limit behaviour, see the corresponding discussion in Section \ref{sec6}. In this case, our covariance function is analogous to the so-called Berry’s random wave model in \cite{maini2024spectral}, and we expect that the first non-zero Hermite coefficient is not necessarily playing the main role in the asymptotic theory. We plan to investigate this problem in the future.

The paper has the following structure. Section~\ref{sec2} gives the main definitions and assumptions. It introduces the considered model and demonstrates it through various examples. For the comparison, two known results for long-range dependent fields are presented. Section~\ref{sec3} and~\ref{sec4} prove the main results about multiscaling limit theorems under cyclic long-range dependent scenarios.
The paper also presents numerical examples that illustrate the obtained results. Some future research directions are discussed in Section~\ref{sec6}.

All numerical computations, simulations and plotting in this paper were performed using the software R (version 4.4.1) and Maple~2023. The corresponding R
and Maple code is freely available in the folder
”Research materials” from the website\\ \url{https://sites.google.com/site/olenkoandriy/}.

\section{Main definitions, models, assumptions and examples}
\label{sec2}

Let $(\Omega, \mathcal{F}, P)$ be a complete probability space. This paper studies spatio-temporal random fields $u(t,x,\omega), $ $ t\in \mathbb R_+:=\{t \in \mathbb R:t\ge 0\},$ $x\in {\mathbb R^d},$ $d\in \mathbb{N},$ and $\omega\in \Omega.$  For simplicity, the argument $\omega$ will be suppressed, unless its exclusion results in confusion.

For an appropriate class of functions the Caputo-Djrbashian non-local operators or fractional derivative of order $\beta \in (0,1]$ is defined as follows (for more details see \cite[p.4]{almeida2017caputo}, \cite[remark 2.1 (ii)]{d2021non}, \cite[p.39]{MeerschaertSikorskii}, \cite[(2.138)]{podlubny1998fractional}, \cite[p.4]{tavares2016caputo} )
\begin{align*}
\partial^\beta _t u(t,x):= \frac{\partial^\beta}{\partial t^\beta } u(t,x) &: = \frac{1}{\Gamma(1-\beta)}\left(\frac{\partial}{\partial t}\int^t_0\frac{u(\tau,x)d\tau}{(t-\tau)^\beta}-\frac{u(0,x)}{t^\beta}\right)\\
&= \frac{1}{\Gamma(1-\beta)}\int^t_0\frac{\frac{\partial}{\partial\tau}u(\tau,x)}{(t-\tau)^{\beta}}d\tau, \quad\beta\in(0,1),
\end{align*}
and
\[\frac{\partial^{\beta}}{\partial t^{\beta}}u(t,x)=\frac{\partial u}{\partial t}(t,x), \quad\text{if}\quad \beta=1,\]
where $\Gamma(\cdot)$ denotes the Gamma function.

Consider the following operator
\[\partial ^{\beta}_t+\mu(I-\Delta)^{\gamma/2}(-\Delta)^{\alpha/2}, \quad\alpha \geq 0, \quad \gamma>0, \quad \mu >0,\]
where $\Delta$ is the $d$-dimensional Laplace operator. The operators $-(I-\Delta)^{\gamma/2}$  and $(-\Delta)^{\alpha/2}$  are interpreted as inverses of the Bessel and Riesz potentials respectively, see, for example, \cite{Anh1999NonGaussianSF, ANH2000239}. For $\gamma\in \mathbb{R_+},$ the integral operator \[\mathfrak{I}_{\gamma}=(I-\Delta)^{-\gamma/2}\]  is called the Bessel potential of order $\gamma$, where the kernel $\mathfrak{I}_{\gamma}$ is given by \[\mathfrak {I}_{\gamma}(x)=\frac{1}{(4\pi)^{\gamma/2}\Gamma(\gamma/2)}\int_{0}^{\infty}e^{-\pi|x|^{2}/s}e^{-s/{4\pi}}s^{(-n+\gamma)/2}\frac{ds}{s}.\]  The Riesz potential is defined by $\mathfrak{J}_{\alpha}=(-\Delta)^{-\alpha/2},$ $0<\alpha<n.$ Then, for $f\in S(\mathbb{R}^{n})$,
\[  \mathfrak{J_{\alpha}}(f)(x)=\frac{1}{g(\alpha)}\int_{\mathbb{R}^{n}}|x-y|^{\alpha-n}f(y)dy=(J_{\alpha} \ast f)(x),
\]
where\[g(\alpha)=\frac{\pi^{n/2}2^{\alpha}\Gamma(\alpha/2)}{\Gamma(n/2-\alpha/2)},\] and $J_{\alpha}(t)={|t|^{\alpha-n}}/{g(\alpha)}$ is the Riesz kernel, see Appendix B in \cite{anh2001spectral}.

The Green function $G(t,x)$ of this operator is defined via its spatial Fourier transform
\[\widehat{G}(t,\lambda)=\int_{\mathbb R^d}G(t,x)e^{i\langle x,\lambda\rangle}dx=E_{\beta}\left(-\mu\Vert {\lambda}\Vert^\alpha(1+\Vert{\lambda}\Vert^{2})^{\gamma/2}t^{\beta}\right), \quad\lambda \in \mathbb{R}^d,\]
where $E_{\beta}(\cdot)$ is the Mittag-Leffler function given  by the series
\[E_{\beta}(z):=\sum_{k=0}^{\infty}\frac{z^{k}}{\Gamma(\beta{k}+1)}, \quad z\in \mathbb {C}, \quad 0<\beta<1.\]
Note that if $\beta=1$, then
$E_{1}(-z)=e^{-z},$ $\quad z\geq 0.
$

For real $u \geq 0$ it satisfies the inequality
\begin{equation}\label{mattig cond}
  \frac{1}{1+\Gamma(1-\beta)u} \leq E_{\beta}(-u)\leq \frac{1}{1+\frac{u}{\Gamma(1+\beta)}}, \quad u\geq 0.
\end{equation}
For more details see \cite{mainardi2014some, Simon2013ComparingFA}.

This paper investigates the following SPDE.

\paragraph{Model:}  The non-local fractional Riesz-Bessel equation~(FRBE) is given by
\begin{equation}\label{Riesz-Bessel equation}
   \frac{\partial^{\beta}}{\partial t^{\beta}}u(t,x)=-\mu(I-\Delta)^{\gamma/2}(-\Delta)^{\alpha/2}u(t,x) , \quad t>0,\quad x\in \mathbb R^d,
\end{equation}
 subject to the random initial condition
 \begin{equation}\label{initial condition}
      u(0,x,\omega)=\eta(x,\omega), \quad x \in \mathbb R^d, \quad \omega\in \Omega,
 \end{equation}
      where $\eta(x,\omega)$ is a measurable random field on the probability space $(\Omega, \mathcal{F}, P).$
      The solution $u(t,x)$ can be interpreted as a mean-square solution of the initial value problem ~(\ref{Riesz-Bessel equation})-(\ref{initial condition}), see (\cite{ANH200377}).

      For simplicity, the notations $u(t,x)$ and $\eta(x)$ with the suppressed argument $\omega$ will be used in the future.

Note that
\[\int_{\mathbb R^d} G(t,x)dx=1, \quad \text{for all}\quad t\geq 0,\] and the Green function solution of (\ref{Riesz-Bessel equation}) has the form
\[
  u(t,x)=\int_{\mathbb R^d} G(t,x-y)\eta(y)dy.
\]

To introduce the class of random fields used as the initial condition (\ref{initial condition}) for solving the equation (\ref{Riesz-Bessel equation}), the following assumptions are required.

\paragraph{Condition A:}The function $h(\cdot)\in L_2\left(\mathbb R, \varphi(u)du\right),$ i.e. such that \[\int_{\mathbb R}h{^2}(u)\varphi(u)du <\infty,\]
where
$\varphi(u)=e^{-u^2/2}/\sqrt{2\pi},$ $u\in\mathbb R,$
is the standard Gaussian density.

Such functions $h(\cdot)$ can be expanded in the series
\[h(u)=C_0+\sum_{k=1}^{\infty}\frac{C_k}{k!}H_{k}(u),\quad C_k:=\int_{\mathbb R}h(u)\varphi(u)H_{k}(u)du, \]
where $\{H_{k}(u)\}_{k=0}^{\infty}$ are the Hermite polynomials, see \cite{Peccati2011}.
\paragraph{Condition B:} There exists an integer $m\geq 1$ such that
$C_{1}=...=C_{m-1}=0,$ $ C_{m}\neq 0.$
The integer $m$ is called the Hermitian rank of the function $h(\cdot).$

\paragraph{Condition C:} The random initial condition field is $\eta(x)=h(\xi(x)),$ $ x\in \mathbb R^d,$
where $\xi(x),$ $ x\in \mathbb R^{d},$ is a real-valued, measurable, mean square continuous, homogeneous and isotropic Gaussian random field with $\mathbb E\xi(x)=0$ and $\mathbb E\xi^2(x)=1.$ The spectral measure of random field $\xi(x)$  is absolutely continuous in the sense given below.

Its covariance function $B_\xi(\Vert x\Vert):=Cov(\xi(0),\xi(x))$ depends only on $\Vert x\Vert$ and can be given as
\begin{equation}\label{cov}
B_\xi(\Vert x\Vert) = \int_{\mathbb R^d}e^{i\langle\lambda,x\rangle}dF_\xi(\lambda)  =  \int_{0}^{\infty}Y_d(r\Vert x\Vert)\Phi(dr),
\end{equation}
where $F_\xi(\cdot)$ is the spectral measure of the field $\xi(x)$ on $(\mathbb R^{d}, \mathscr{B}(\mathbb R^{d}))$, $\Phi(\cdot)$ is the corresponding isotropic spectral measure, the function $Y_d(\cdot)$ is defined by
\[Y_d(r):=\frac{ 2^\nu\Gamma(\nu+1)}{r^\nu}J_{\nu}(r),\quad \nu=\frac{d-2}{2}, \quad  r\ge 0,\]
and $J_\nu(\cdot)$ is the Bessel function of the first kind of order $\nu.$

The absolutely continuous spectral measure $F_{\xi}(\cdot)$  can be represented as
\[F(\Delta)=\int_{\Delta} f_{\xi}(\lambda)d\lambda,\quad \Delta\in \mathscr{B}(\mathbb R^{d}). \]
The function $f_{\xi}(\lambda), \lambda\in \mathbb R^{d},$ which is integrable over $\mathbb R^{d}$, is called the spectral
density function of the homogeneous random field $\xi(\cdot)$.
In this case the spectral representation
(\ref{cov}) can be written as \[B_\xi(\Vert x\Vert) = \int_{\mathbb R^d}e^{i\langle\lambda,x\rangle}f_{\xi}(\lambda)d\lambda,\]
see {\cite{Statisticalanalysis, Leonenko1999LimitTF}}.
\begin{example}
For $d=1,2,$ and $3$ it holds  $Y_1(r)=\cos(r),$ $Y_2(r)=J_0(r),$ and $Y_{3}(r)={\rm sinc}(r)={\sin(r)}/{r}$ respectively.
\end{example}

It follows from Conditions A and C that $\mathbb Eh^2(\xi(x))<\infty$ and the random field~$\xi(x)$ has the following isonormal spectral representation
\begin{equation}\label{spec_xi}\xi(x)=\int_{\mathbb R^d}e^{i\langle \lambda,x \rangle} \sqrt{f_{\xi}(\lambda)}W(d\lambda),
\end{equation}
 where $W(\cdot)$ is the white-noise random measure on $\mathbb{R}^d$, for more details see \cite[Ch.~2, p.~114]{Statisticalanalysis}, and $f_{\xi}(\lambda)\in L_{1}(\mathbb R^{d})$ is spectral density of the field $\xi(\cdot).$

Using the following spectral representation of the random field $\eta(x)$
\[\eta(x)=\int_{\mathbb R^{d}}e^{i\langle\lambda,x\rangle}Z(d\lambda),\quad \mathbb E|Z(d\lambda)|^{2}=F_{h}(d\lambda),\]
where \[Cov(\eta(0),\eta(x))=\int_{\mathbb R^{d}}e^{i\langle\lambda,x\rangle}F_{h}(d\lambda), \]
one obtains the following spectral representation of the solution of the initial value problem (\ref{Riesz-Bessel equation}) and (\ref{initial condition})
 \begin{align}{\label{spectral 1}}
 u(t,x)&=\int_{\mathbb R^{d}} G(t,x-y)\eta(y)dy \nonumber \\&=\int_{\mathbb R^{d}}e^{i\langle\lambda,x\rangle}E_{\beta}\left(-\mu\Vert {\lambda} \Vert^\alpha(1+\Vert{\lambda}\Vert^{2})^{\gamma/2}t^{\beta}\right)Z(d\lambda).
 \end{align}

 Then, the covariance function of the random field (\ref{spectral 1}) has the form
 \begin{align*}
      Cov\left(u(t,x),u(t',x')\right)=&\int_{\mathbb R^{d}}e^{i\langle\lambda,x-x'\rangle}E_{\beta}\left(-\mu\Vert{\lambda} \Vert^\alpha(1+\Vert{\lambda}\Vert^{2})^{\gamma/2}t^{\beta}\right) &\\&\times E_{\beta}\left(-\mu\Vert {\lambda} \Vert^\alpha(1+\Vert{\lambda}\Vert^{2})^{\gamma/2}(t')^{\beta}\right)F_h(d\lambda).
 \end{align*}

 The following examples present several important well-known particular cases of the general model.

 \begin{example} If $\beta=1,$ $ \gamma=0$, $\alpha=2,$
 then equation {\rm (\ref{Riesz-Bessel equation})} becomes the heat equation, i.e.
 \[  \frac{\partial u(t,x)}{\partial t}=\mu\Delta u(t,x), \quad t>0,\quad x\in \mathbb R^d,
\] and its solution has the following spectral representation
 \[u(t,x)=\int_{\mathbb R^{d}}e^{i\langle\lambda,x\rangle-\mu t\Vert \lambda\Vert^{2}} Z_{h}(d\lambda).\]
  \end{example}
 \begin{example} For the case of $\beta \in(0,1],$ $ \gamma=0,$ and $ \alpha=2,$ the equation {\rm (\ref{Riesz-Bessel equation})} and the integral in {\rm (\ref{spectral 1})} reduce to
  \[   \frac{\partial^{\beta}}{\partial t^{\beta}}u(t,x)=\mu\Delta u(t,x), \quad t>0,\quad x\in \mathbb R^d,
\]
 \[u(t,x)=\int_{\mathbb R^{d}}e^{i\langle\lambda,x\rangle}E_{\beta}\left(-\mu t^{\beta}\Vert \lambda\Vert^{2}\right)Z_{h}(d\lambda).\]
 \end{example}

To establish limit theorems for the solutions of equation (\ref{Riesz-Bessel equation}), one needs to incorporate additional assumptions on the dependence structure of the random field~$\xi(x)$ governing the initial condition (\ref{initial condition}).

\paragraph{Condition D:} The random field $\xi(x)$ in Condition C has the covariance function of the form
\[{B}_{\xi}(\Vert x\Vert)=\frac{L(\Vert x\Vert)}{\Vert x \Vert^{\kappa}}, \quad 0<\kappa<\frac{d}{m},\]
where $L:(0,\infty)\to (0,\infty),$ is a slowly varying at infinity function, see \cite{Bingham_Goldie_Teugels}.

\begin{Remark}\label{remark 1}
Note that the covariance functions satisfying Condition~D are non-integrable, implying that the random field $\xi(x)$ exhibits long-range dependence.
\end{Remark}
\begin{Remark}
By Tauberian-Abelian theorems, under some minor assumptions on the spectral density, see {\rm \cite{Leonenko1991TauberianAA, Leonenko_Olenko_2013}}, Condition D is equivalent to the following spectral conditions. These types of conditions are often more convenient for proofs.
\end{Remark}

\paragraph{Condition \texorpdfstring{D${}^\prime$}{D'}:}
The random field $\xi(x)$ in Condition C has the spectral density of the form
\[f_{\xi}(\Vert \lambda \Vert)= C(d,\kappa)\frac{L_0({1}/{\Vert \lambda \Vert})}{\Vert \lambda \Vert^{d-\kappa}}, \quad \lambda \in \mathbb{R}^d,\] where $L_0(\cdot)$ is a slowly varying functions such that $L_0(r) \sim L(r),$ when $r\to+\infty,$ and \[C(d,\kappa):=\frac{\Gamma\left(\frac{d-\kappa}{2}\right)}{2^{\kappa}\pi^{d/2}\Gamma\left(\frac{\kappa}{2}\right)}.\]

Let $W(\cdot)$ be the white-noise random measure from (\ref{spec_xi}),
$\int_{\mathbb{R}^{d_m}}'(\ldots)W(d\lambda_{1})\ldots W(d\lambda_{m})$
denote the multiple Wiener-It\^{o} stochastic integral with respect to $W$, with the diagonals $\lambda_k=\pm\lambda_j$,
$k, j =1,\ldots,m$, $k\neq j$, being excluded from the domain of integration.

For the introduced classes of random fields, the following asymptotic results provide multiscaling limit theorems for solutions of the initial value problem (\ref{Riesz-Bessel equation}) and (\ref{initial condition}) for the classical long-range dependent case.
\begin{theorem}{\rm \cite{anh2001spectral}} \label{th1} Consider the random field $u(t,x),$ $ t\ge 0,$ $ x\in \mathbb R^d,$ defined by {\rm (\ref{spectral 1})} in which $\eta(x),$ $ x\in \mathbb R^d,$ satisfies Condition~C with the rank $m\geq 1$ and $\xi(x),$ $ x\in \mathbb R^d,$ satisfies condition A, B and D${}^\prime$ with $\kappa<\frac{min(2\alpha,d)}{m}.$

Then, when $\varepsilon\to 0,$ the finite-dimensional distributions of the random fields
\[U_{\varepsilon}(t,x):={\varepsilon^{-\frac{m\kappa\beta}{2\alpha}}}u\left(\frac{t}{\varepsilon},\frac{x}{\varepsilon^{\beta/\alpha}}\right), \quad \varepsilon>0, \quad t>0, \quad x\in \mathbb R^d,\]
converges  to the finite-dimensional distributions of the field
\begin{align*}
    U_{m}(t,x):=&\frac{C_m}{m!}C^{m/2}(d,x)\int_{\mathbb R^{d_m}}'\frac{e^{i\langle x,\lambda_{1}+...+\lambda_{m} \rangle}}{\left(\|\lambda_{1}\|...\|\lambda_{m}\|\right)^{\frac{d-\kappa}{2}}} \\
   &\times E_{\beta}\left(-\mu t^{\beta}\|\lambda_{1}+...+\lambda_{m}\|^{\alpha}\right)W(d\lambda_{1})...W(d\lambda_{m}).
\end{align*}
    \end{theorem}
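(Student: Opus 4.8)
The plan is to reduce the statement to an $L_2$-convergence result for multiple Wiener--It\^o integrals. First I would use Conditions~A and~B to expand $\eta(x)=C_0+\sum_{k\ge m}\frac{C_k}{k!}H_k(\xi(x))$; by linearity of the convolution with the Green function the solution splits as $u(t,x)=C_0+\sum_{k\ge m}\frac{C_k}{k!}u_k(t,x)$ with $u_k(t,x)=\int_{\mathbb R^d}G(t,x-y)H_k(\xi(y))\,dy$. Combining the isonormal representation~(\ref{spec_xi}) with the It\^o formula for Hermite polynomials writes $H_k(\xi(y))$ as a $k$-fold integral with kernel $e^{i\langle y,\lambda_1+\cdots+\lambda_k\rangle}\prod_{j}\sqrt{f_\xi(\lambda_j)}$. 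A stochastic Fubini step (licit because the kernel lies in $L_2(\mathbb R^{d_k})$) together with $\int_{\mathbb R^d}G(t,x-y)e^{i\langle y,\Lambda\rangle}\,dy=e^{i\langle x,\Lambda\rangle}\widehat G(t,\Lambda)$, $\Lambda=\lambda_1+\cdots+\lambda_k$, then yields $u_k(t,x)=\int_{\mathbb R^{d_k}}'e^{i\langle x,\Lambda\rangle}E_\beta\bigl(-\mu\|\Lambda\|^\alpha(1+\|\Lambda\|^2)^{\gamma/2}t^\beta\bigr)\prod_j\sqrt{f_\xi(\lambda_j)}\,W(d\lambda_1)\cdots W(d\lambda_k)$.

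The decisive computation is the change of variables $\lambda_j=\varepsilon^{\beta/\alpha}\mu_j$ in each $u_k(t/\varepsilon,x/\varepsilon^{\beta/\alpha})$, carried out through the self-similarity of white noise, $W(\varepsilon^{\beta/\alpha}\,d\mu)\stackrel{d}{=}\varepsilon^{d\beta/(2\alpha)}\widetilde W(d\mu)$. Under it the phase collapses to $e^{i\langle x,\mu_1+\cdots+\mu_k\rangle}$; the Riesz part of the Mittag-Leffler argument is scale invariant, $\mu\|\Lambda\|^\alpha(t/\varepsilon)^\beta=\mu\|M\|^\alpha t^\beta$ with $M=\mu_1+\cdots+\mu_k$; the Bessel part becomes $(1+\varepsilon^{2\beta/\alpha}\|M\|^2)^{\gamma/2}\to1$; and Condition~D$'$ gives $\prod_j\sqrt{f_\xi(\varepsilon^{\beta/\alpha}\mu_j)}=C^{k/2}(d,\kappa)\,\varepsilon^{-k\beta(d-\kappa)/(2\alpha)}\prod_j\|\mu_j\|^{-(d-\kappa)/2}\sqrt{L_0(1/(\varepsilon^{\beta/\alpha}\|\mu_j\|))}$. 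Collecting the $\varepsilon$-powers $\varepsilon^{dk\beta/(2\alpha)}$ (noise) and $\varepsilon^{-k\beta(d-\kappa)/(2\alpha)}$ (densities) leaves $\varepsilon^{k\kappa\beta/(2\alpha)}$, so the normalization $\varepsilon^{-m\kappa\beta/(2\alpha)}$ cancels it exactly at $k=m$ and produces a residual factor $\varepsilon^{(k-m)\kappa\beta/(2\alpha)}$ for $k>m$.

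For the leading component I would pass to the limit $\varepsilon\to0$ inside the $m$-fold integral. By the Wiener--It\^o isometry it suffices to show convergence in $L_2(\mathbb R^{d_m})$ of the kernels to $e^{i\langle x,M\rangle}E_\beta(-\mu\|M\|^\alpha t^\beta)\prod_j\|\mu_j\|^{-(d-\kappa)/2}$. The structural fact that makes this tractable is that the singularity lies at the origin, so the rescaling $\lambda=\varepsilon^{\beta/\alpha}\mu$ probes precisely the frequencies where Condition~D$'$ holds; hence, unlike the cyclic case treated later, a fixed $\varepsilon$-independent integrable majorant exists and the dominated convergence theorem applies. Pointwise convergence of the integrand uses continuity of $E_\beta$, the limit $(1+\varepsilon^{2\beta/\alpha}\|M\|^2)^{\gamma/2}\to1$, and the uniform convergence theorem for slowly varying functions, whereby $L_0(1/(\varepsilon^{\beta/\alpha}\|\mu_j\|))$ is, along fixed $\mu_j$, asymptotically constant relative to $L_0(\varepsilon^{-\beta/\alpha})$. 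Membership of the limit kernel in $L_2(\mathbb R^{d_m})$ is where the exponent hypothesis enters: integrability near the origin of $\prod_j\|\mu_j\|^{-(d-\kappa)}$ holds for every $\kappa>0$, while near infinity~(\ref{mattig cond}) gives $E_\beta(-u)\le(1+u/\Gamma(1+\beta))^{-1}$, hence $E_\beta^2\lesssim\|M\|^{-2\alpha}$, and the homogeneity count---volume $R^{dm}$ against integrand $R^{-(d-\kappa)m-2\alpha}$---yields finiteness exactly when $\kappa<2\alpha/m$; together with the standing $\kappa<d/m$ from Condition~D this is $\kappa<\min(2\alpha,d)/m$.

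It remains to show the higher-order components are negligible and to assemble the finite-dimensional distributions. Each $U_\varepsilon^{(k)}$, $k>m$, carries the residual factor $\varepsilon^{(k-m)\kappa\beta/(2\alpha)}$, but when $k\kappa\ge2\alpha$ its putative limit kernel fails to be square integrable, so instead of passing to a limit I would estimate $\Var\bigl(U_\varepsilon^{(k)}\bigr)$ directly: truncating the scaled integral at the high-frequency cutoff $\|\mu\|\sim\varepsilon^{-\beta/\alpha}$ supplied by $f_\xi\in L_1(\mathbb R^d)$ and using $E_\beta^2\lesssim\|M\|^{-2\alpha}$ shows this variance decays no slower than $\varepsilon^{\beta(2-m\kappa/\alpha)}$ in that borderline regime and like $\varepsilon^{(k-m)\kappa\beta/\alpha}$ otherwise, both tending to zero precisely under $\kappa<2\alpha/m$. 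Summability over $k$ is guaranteed by Parseval's identity $\sum_kC_k^2/k!=\mathbb E\,h^2(\xi(0))<\infty$. Since every component is a multiple Wiener--It\^o integral, $L_2(\Omega)$-convergence of the kernels gives convergence of the rank-$m$ chaos and the vanishing of the rest, and testing arbitrary linear combinations $\sum_i a_i U_\varepsilon(t_i,x_i)$ promotes this to convergence of the finite-dimensional distributions to those of $U_m$. I expect the main obstacle to be exactly this higher-order bookkeeping: constructing, uniformly in $\varepsilon$ and simultaneously for every chaos order, the truncated variance estimates in the regime where the term-by-term integrability threshold $2\alpha/k$ is violated, so that the single condition $\kappa<2\alpha/m$ still forces the whole tail $\sum_{k>m}$ to zero.
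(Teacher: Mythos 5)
The paper does not prove this statement: Theorem~\ref{th1} is quoted verbatim from \cite{anh2001spectral} as background, and the only proofs the paper supplies are for its new results (Theorems~\ref{the3} and~\ref{the4}), which are the $m=1$ analogues with spectral singularities at $\pm w$ instead of at the origin. Your proposal is a correct reconstruction of the standard argument behind the cited theorem, and it uses exactly the machinery the paper itself deploys in Sections~\ref{sec3} and~\ref{sec4}: pass to the spectral representation, rescale $\lambda_j=\varepsilon^{\beta/\alpha}\mu_j$ using the self-similarity of white noise, track the powers of $\varepsilon$ contributed by the noise and by the spectral density, and justify the $L_2$-limit of the kernels by dominated convergence. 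The extra content relative to the paper's proofs is the chaos decomposition and the disposal of the terms $k>m$, which you correctly identify as the delicate part. Two places where your sketch leans on facts worth making explicit: (i) the square-integrability of the limit kernel is most cleanly obtained via the Riesz composition formula, reducing $\int_{\mathbb R^{dm}}E_\beta^2(-\mu t^\beta\|\lambda_1+\dots+\lambda_m\|^\alpha)\prod_j\|\lambda_j\|^{-(d-\kappa)}d\lambda$ to $c\int_{\mathbb R^d}E_\beta^2(-\mu t^\beta\|u\|^\alpha)\|u\|^{m\kappa-d}du$; this is where the requirement $m\kappa<d$ genuinely enters (it is needed for the $m$-fold convolution of $\|\cdot\|^{-(d-\kappa)}$ to exist), not merely as a standing hypothesis, and the bound \eqref{mattig cond} then gives finiteness iff $m\kappa<2\alpha$; (ii) the ``$\varepsilon$-independent integrable majorant'' for the rescaled slowly varying factors requires Potter-type bounds, not just the uniform convergence theorem. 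Neither point is a gap in the approach, only in the level of detail, and your variance estimates for the higher chaoses (decay $\varepsilon^{(\beta/\alpha)(\min(k\kappa,d)-m\kappa)}$ away from the borderline and $\varepsilon^{\beta(2-m\kappa/\alpha)}$ at it) are consistent with what the convolution-power reduction yields.
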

\begin{Remark} The assumption $\kappa < \min(2\alpha, d)/m$ guaranties that the random field $\xi(x)$ is long-range dependent, see Remark \ref{remark 1}. At the same time, as the spectral density of $U_1(t,x)$ has the next asymptotic behavior, see equation (4.24) in \cite{anh2001spectral},
$$\frac{c}{\mu t^{\beta}||\lambda||^{d+2\alpha-\kappa}}+O\left(\frac{1}{||\lambda||^{d+2\alpha-\kappa+1}}\right),$$
the assumption guarantees the integrability of the spectral density. The multipliers $||\lambda||^{d + 2\alpha - \kappa}$ and $t^{\beta}$ in the leading term indicate the presence of second-order intermittency and non-exponential relaxation function, consult \cite{anh2001spectral}.
    \end{Remark}
\begin{theorem}{\rm \cite{Anh2002RenormalizationAH}}\label{th2}
Consider the random field $u(t,x),$ $ t\ge 0,$ $ x\in \mathbb R^d,$ defined by {\rm (\ref{spectral 1})} in which $\eta(x),$ $ x\in \mathbb R^d,$ satisfy Condition~C with the rank $m\geq 1$ and $\xi(x),$ $ x\in \mathbb R^d,$ satisfy Condition~D${}^\prime$,  some additional conditions on one and two-dimensional densities given in {\rm\cite[$A''$]{Anh2002RenormalizationAH}} and
 \[\int_{\mathbb R^{d}}|B_{\xi}(x)|^{m}dx<\infty, \quad K:=\sum_{k=m}^{\infty}\frac{C_{k}^{2}}{k!}\int_{\mathbb R^{d}}B_{\xi}^{k}(x)dx>0.\] Then, the finite-dimensional distributions of the random fields
 \[\tilde{U}_{\varepsilon}(t,x):={\varepsilon^{-d\beta/2\alpha}}u\left(\frac{t}{\varepsilon},\frac{x}{\varepsilon^{\beta/\alpha}}\right), \quad \varepsilon>0, \quad t>0, \quad x\in \mathbb R^d,\]
 converge as $\varepsilon\to 0$ to the finite-dimensional distribution of the homogenous Gaussian field $\tilde{U}(t,x),$ $ t> 0,$ $ x\in \mathbb R^d,$
 with zero mean and the covariance function
 \[E\tilde{U}(t,x)\tilde{U}(t',x')=KG(t-t',||x-x'||), \]
where $G(\cdot)$ is the corresponding Green function.
\end{theorem}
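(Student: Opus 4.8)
The plan is to prove convergence of the finite-dimensional distributions by the usual two-step programme for subordinated fields: first fix the limiting second-order structure, then establish asymptotic Gaussianity, and finally pass from one-point to joint convergence by the Cramér--Wold device. Since a Gaussian field is determined by its mean and covariance, once Gaussianity is known it suffices to match these.

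\emph{Reduction and representation.} Expanding $h$ as in Condition~A and invoking Condition~B, write the centred initial field as $\eta(x)-C_0=\sum_{k\ge m}\frac{C_k}{k!}H_k(\xi(x))$. Because the solution operator is the linear convolution $u(t,x)=\int_{\mathbb R^d}G(t,x-y)\eta(y)\,dy$, the centred solution splits as $u(t,x)-C_0=\sum_{k\ge m}\frac{C_k}{k!}u_k(t,x)$ with $u_k(t,x):=\int_{\mathbb R^d}G(t,x-y)H_k(\xi(y))\,dy$. Substituting the isonormal representation (\ref{spec_xi}) of $\xi$, integrating the spatial exponential against $G$, and using that $\widehat G(t,\cdot)$ is even, one obtains each $u_k$ as a $k$-fold Wiener--It\^o integral with kernel $e^{i\langle x,\lambda_1+\dots+\lambda_k\rangle}\widehat G(t,\lambda_1+\dots+\lambda_k)\prod_j\sqrt{f_\xi(\lambda_j)}$, exactly as in Theorem~\ref{th1}.

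\emph{Covariance.} By the hypothesis $\int_{\mathbb R^d}|B_\xi|^m<\infty$, the covariance $R(\|z\|):=\Cov(\eta(0),\eta(z))=\sum_{k\ge m}\frac{C_k^2}{k!}B_\xi^k(\|z\|)$ is absolutely integrable with $\int_{\mathbb R^d}R=K$, so $\eta$ admits a bounded spectral density $f_\eta$, continuous at the origin with $f_\eta(0)=(2\pi)^{-d}K$. Using the spectral form of (\ref{spectral 1}),
\[\Cov\!\big(\tilde U_\varepsilon(t,x),\tilde U_\varepsilon(t',x')\big)=\varepsilon^{-d\beta/\alpha}\!\int_{\mathbb R^d}e^{i\langle (x-x')/\varepsilon^{\beta/\alpha},\lambda\rangle}\widehat G\!\big(\tfrac t\varepsilon,\lambda\big)\widehat G\!\big(\tfrac{t'}\varepsilon,\lambda\big)f_\eta(\lambda)\,d\lambda.\]
The substitution $\lambda=\varepsilon^{\beta/\alpha}\zeta$ cancels the prefactor against the Jacobian, and the self-similarity of the Mittag-Leffler symbol gives $\widehat G(t/\varepsilon,\varepsilon^{\beta/\alpha}\zeta)=E_\beta\big(-\mu t^\beta\|\zeta\|^\alpha(1+\varepsilon^{2\beta/\alpha}\|\zeta\|^2)^{\gamma/2}\big)\to E_\beta(-\mu t^\beta\|\zeta\|^\alpha)$, the Bessel correction being asymptotically negligible. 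Passing to the limit under the integral, which is justified by dominated convergence with the monotone bound (\ref{mattig cond}) producing the majorant $\asymp(1\wedge\|\zeta\|^{-2\alpha})$ once $2\alpha>d$ together with the boundedness of $f_\eta$, yields the limiting covariance $(2\pi)^{-d}K\int_{\mathbb R^d}e^{i\langle x-x',\zeta\rangle}E_\beta(-\mu t^\beta\|\zeta\|^\alpha)E_\beta(-\mu (t')^\beta\|\zeta\|^\alpha)\,d\zeta$, which by Parseval's identity and the definition of the Green function is the covariance $K\,G(t-t',\|x-x'\|)$ asserted in the statement.

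\emph{Gaussianity --- the hard part.} Fix a linear combination $S_\varepsilon=\sum_{j}a_j\tilde U_\varepsilon(t_j,x_j)$; by Cramér--Wold it suffices to show $S_\varepsilon$ is asymptotically normal. The natural route is to truncate the chaos expansion at level $N$, prove a fourth-moment (Malliavin--Stein) central limit theorem for the truncated functional $S_\varepsilon^N$ --- the covariance already converges, so one only verifies that the relevant contraction norms of the scaled kernels from the reduction step tend to $0$ --- and then control the tail $S_\varepsilon-S_\varepsilon^N$ uniformly in $\varepsilon$ in $L^2$, its variance being $\sum_{k>N}\frac{C_k^2}{k!}(\cdots)$ and hence arbitrarily small by convergence of $K$, so that a standard approximation lemma transfers the Gaussian limit to the full sum. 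I expect this Gaussianity step to be the main obstacle: unlike in the fixed-singularity case, the rescaling sends the singular support of the kernels to infinity, so the contraction estimates cannot be read off a single dominating kernel and must instead be extracted from $\int|B_\xi|^m<\infty$ and the auxiliary conditions on the one- and two-dimensional densities, uniformly in $\varepsilon$ and with simultaneous control of the infinitely many Hermite ranks. Equivalently, in the cumulant/diagram formulation one must show that every connected diagram of order $p\ge3$ contributes $o(1)$ after the $\varepsilon^{-pd\beta/2\alpha}$ normalisation, which is the spectral-domain analogue of the Breuer--Major cumulant bound carried through the self-similar rescaling.
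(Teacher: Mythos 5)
You should first be aware that the paper contains no proof of Theorem~\ref{th2}: it is quoted from \cite{Anh2002RenormalizationAH} purely as background for comparison with the new cyclic results of Sections~\ref{sec3}--\ref{sec4}, so there is no in-paper argument to measure yours against. Judged on its own terms, your reduction to Wiener chaos and your second-order computation are essentially sound: integrability of $\sum_{k\ge m}\frac{C_k^2}{k!}B_\xi^k$ follows from $|B_\xi|\le 1$ and $\int_{\mathbb R^d}|B_\xi|^m\,dx<\infty$, the substitution $\lambda=\varepsilon^{\beta/\alpha}\zeta$ does produce $E_\beta\bigl(-\mu t^\beta\|\zeta\|^\alpha(1+\varepsilon^{2\beta/\alpha}\|\zeta\|^2)^{\gamma/2}\bigr)$ against $f_\eta(\varepsilon^{\beta/\alpha}\zeta)\to(2\pi)^{-d}K$, and the dominated-convergence majorant obtained from (\ref{mattig cond}) indeed requires $2\alpha>d$, which you correctly flag. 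One caution: the limiting integral $(2\pi)^{-d}K\int_{\mathbb R^d} e^{i\langle x-x',\zeta\rangle}E_\beta(-\mu t^\beta\|\zeta\|^\alpha)E_\beta(-\mu (t')^\beta\|\zeta\|^\alpha)\,d\zeta$ is correct, but your closing identification of it with $K\,G(t-t',\|x-x'\|)$ ``by the definition of the Green function'' does not follow: the Mittag-Leffler function has no semigroup property for $\beta<1$, so the product of the two symbols is not $\widehat G(t-t',\cdot)$, and even for $\beta=1$ the product gives $t+t'$ rather than $t-t'$ (compare the paper's own covariance (\ref{cov3}) in Theorem~\ref{the3}, which involves $t+t'$). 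The limit covariance should be left in the integral form; the notation $G(t-t',\cdot)$ in the statement is at best shorthand for that kernel.

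The genuine gap is the Gaussianity step, which is the substance of the theorem and which your proposal leaves as a programme rather than a proof. Truncating the chaos expansion, invoking a fourth-moment theorem, and asserting that contraction norms (equivalently, all connected diagrams of order $p\ge 3$) vanish after normalisation is exactly the right skeleton, but none of the estimates are carried out, and this is precisely where the hypotheses $\int_{\mathbb R^d}|B_\xi|^m\,dx<\infty$ and the auxiliary conditions $A''$ on the one- and two-dimensional densities must enter; without uniform (in $\varepsilon$ and in the chaos order $k\ge m$) contraction or cumulant bounds the argument is incomplete. Moreover, the difficulty you cite --- that ``the rescaling sends the singular support of the kernels to infinity'' --- is a feature of the cyclic Condition~D${}^{\prime\prime}$ setting treated in Theorems~\ref{the3} and~\ref{the4}, not of Theorem~\ref{th2}: under Condition~D${}^\prime$ the singularity of $f_\xi$ sits at the origin and is left in place by the scaling $\lambda_j=\varepsilon^{\beta/\alpha}\zeta_j$, and the actual obstacle here is the simultaneous control of infinitely many Hermite ranks in the short-range-dependent regime.
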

This paper aims to generalise these results and investigate multiscaling behaviour for the case of $\xi(x)$ that exhibits cyclic long-range dependence.

\section{Multiscaling limits for the heat equation with cyclic long-range dependent initial conditions}\label{sec3}
This section gives analogous results to Theorems~\ref{th1} and~\ref{th2} in the case of the heat equation when the initial stochastic condition has cyclic long-range dependence. For simplicity, we concentrate on the one-dimensional case with~$d=1.$

 We consider the classical heat equation
\begin{equation}\label{classical heat equation}
   \frac{\partial u(t,x)}{\partial t}=\mu\frac{\partial^2 u(t,x)}{\partial x^2},\quad t>0,\quad x\in \mathbb R,
\end{equation}
subject to random initial
condition
\begin{equation}\label{initial condition for heat equation}
   u(0,x)=\eta(x)=\xi(x)=\int_{\mathbb  R }e^{i\lambda x}Z(d\lambda),\quad x\in\ \mathbb R,
\end{equation}
where $\mathbb{E}|Z(d\lambda)|^{2}=F(d\lambda).$

Note, that in this case $h(x)\equiv x$ in Conditions~A and C and $m=1$ in Condition~B.

The process $\eta (x),$ $ x\in \mathbb R,$ is a stationary Gaussian zero-mean unit-variance stochastic process with the covariance function $r(x):=Cov(\eta(0),\eta(x)),$ $x\in \mathbb R.$
The Bochner-Khinchin theorem assures that the covariance function $r(x)$ has the spectral representation
\begin{equation}\label{covariance function of heat equation}
    r(x) =\int_{\mathbb R}e^{i\lambda x}F(d\lambda), \quad r(0)=1,
\end{equation}
  where the spectral measure $F$ is bounded and positive on $(\mathbb R, \mathscr{B}(\mathbb R)).$

The solution $u(t,x),$ $ t>0,$ $ x\in \mathbb R,$ of the initial value problem  (\ref{classical heat equation})- (\ref{initial condition for heat equation}) can be written as the convolution
\[ u(t,x)=\frac{1}{\sqrt{4\pi\mu t}}\int_{\mathbb R} \eta(y){e^{-\frac{(x-y)^2}{4\mu t}}}dy.\]
Therefore, the solution field $u(t,x)$ is a stationary process in $x$ with the spectral representation
\begin{equation}\label{convolution}
    u(t,x)=\int_{\mathbb R}e^{i\lambda x-\mu \lambda^{2}t}Z(d\lambda),
\end{equation}
has a zero mean and the covariance function
\[
    Cov(u(t,x),u(t',x'))=\int_{\mathbb R}e^{i \lambda (x-x')-\mu\lambda^{2}(t+t')}F(d\lambda).
\]
It shows that the solution is not stationary with respect to time.

We generalise Condition D as follows.

\textbf{Condition D${}^{\prime\prime}$:}  The covariance function (\ref{covariance function of heat equation}) is of the form
\begin{equation}\label{covariance of our problem}
    r_{\kappa,w}(x):=r(x)=\frac{\cos(wx)}{(1+x^{2})^{\kappa/2}},\quad x\in\mathbb R,\quad w\neq 0,\quad 0<\kappa<1.
\end{equation}
This covariance function is non-integrable and has an oscillating behaviour which corresponds to the cyclic long-range dependence scenario.
It follows from (\ref{covariance of our problem}) and~{\cite{Ivanov2013}} that the spectral measure has the representation  $F(d\lambda)=f_{\kappa,w}(\lambda)d\lambda,$ $ \lambda \in \mathbb R,$ with
\begin{equation}\label{spectral0}
f_{\kappa,w}(\lambda):=\frac{c_1(\kappa)}{2}\left(K_{\frac{\kappa-1}{2}}\left(|\lambda+w| \right)|\lambda+w|^{\frac{\kappa-1}{2}}+ K_{\frac{\kappa-1}{2}}\left(|\lambda-w| \right)|\lambda-w|^{\frac{\kappa-1}{2}} \right),
\end{equation}
where $c_1(\kappa):={2^{\frac{1-\kappa}{2}}}/\left({\sqrt{\pi}{\Gamma\left(\frac{\kappa}{2} \right)}}\right)$  and $K_\nu(\cdot)$ is the modified Bessel function of the second kind. The function $K_\nu(z)$ is given as follows \cite{Ivanov2013} \[K_\nu(z)=\frac{1}{2}\int_{0}^{\infty}s^{\nu-1}\exp\left(-\frac{1}{2}\left(s+\frac{1}{2} \right)z\right)ds,\quad z\geq 0,\quad \nu\in \mathbb {R}.\]

\begin{Remark}
As the spectral density $f_{\kappa,\omega}(\cdot)$ is an even function, all random processes considered hereafter are real-valued.
\end{Remark}

Figure~{\ref{fig1}} shows the covariance function~({\ref{covariance of our problem}}) and spectral density~({\ref{spectral0}}) for the case of $\kappa=0.8$ and $w=1.$
\begin{figure}[tb]
    \centering
    \subfloat{{\includegraphics[scale=0.37]{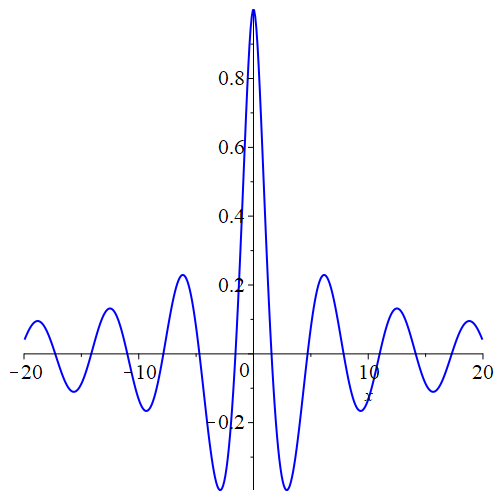} }}
    \qquad
    \subfloat{{\includegraphics[scale=0.37]{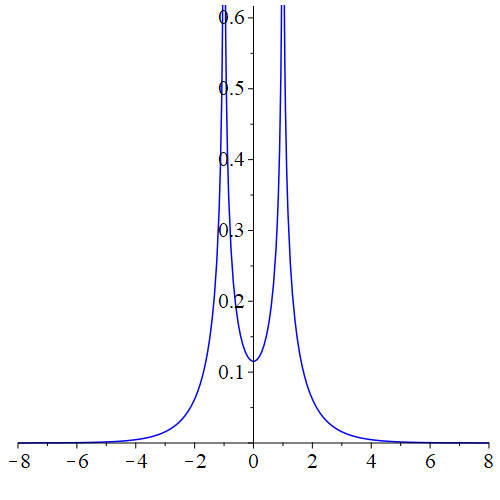} }}
    \\
    \caption{Covariance function and spectral density for $w=1$ and $\kappa=0.8.$}
    \label{fig1}
\end{figure}
\begin{lemma}\label{lem1} The spectral density $f_{\kappa,w}(\cdot)$ has the following representation
\begin{equation}\label{spectral}
f_{\kappa,w}(\lambda)=\frac{c_{2}(\kappa)}{2}\left(\frac{1-\theta_\kappa \left(|\lambda+w|\right)}{|\lambda+w|^{1-\kappa}} +\frac{1-\theta_\kappa \left(|\lambda-w| \right)}{|\lambda-w|^{1-\kappa}}\right),
\end{equation}
where $c_{2}(\kappa):=\left(2\Gamma(\kappa)\cos\left(\frac{\kappa\pi}{2}\right)\right)^{-1}.$

For each $\kappa\in(0,1)$ the function $\theta_\kappa(u)$ is bounded and $\theta_\kappa(u)\leq 1$ on $u\in[0,\infty),$ and it holds
\begin{equation}\label{condition1}
\theta_\kappa(u)=\frac{\Gamma\left(\frac{\kappa+1}{2} \right)}{2^{1-\kappa}\Gamma\left(\frac{3-\kappa}{2} \right)  }\left|{u}\right|^{1-\kappa}- \frac{|u|^{2}}{2(\kappa+1) }+o(|u|^{2}),\quad u\to 0,
\end{equation}
\begin{equation}\label{condition2}
 f_{\kappa,0}(\lambda)\sim\frac{c_{1}(\kappa)\sqrt{\pi}}{\sqrt{2}} |\lambda|^{\frac{\kappa-2}{2}} e^{-|\lambda|},\quad |\lambda| \to \infty.
\end{equation}
\end{lemma}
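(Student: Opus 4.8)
The plan is to treat the representation (\ref{spectral}) as the \emph{definition} of $\theta_\kappa$ and then to verify the stated analytic properties. Writing $\rho:=\frac{\kappa-1}{2}\in(-\tfrac12,0)$ and using $K_\rho=K_{-\rho}$, I would compare (\ref{spectral}) with (\ref{spectral0}) term by term: since both are sums of the same scalar profile evaluated at $|\lambda+w|$ and $|\lambda-w|$, it suffices to match the single-variable functions. Setting
\[
\theta_\kappa(u):=1-\frac{c_1(\kappa)}{c_2(\kappa)}\,u^{-\rho}K_\rho(u),\quad u>0,\qquad \theta_\kappa(0):=0,
\]
and noting $u^{-(1-\kappa)-\rho}=u^{\rho}$, the identity $\frac{c_2(\kappa)}{2}\,\frac{1-\theta_\kappa(u)}{u^{1-\kappa}}=\frac{c_1(\kappa)}{2}\,K_\rho(u)u^{\rho}$ holds by construction, so (\ref{spectral}) is automatic. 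All the work is therefore in establishing the three claimed properties of this $\theta_\kappa$.

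For the qualitative bounds I would argue as follows. Since $0<\kappa<1$ gives $\cos(\kappa\pi/2)>0$, both $c_1(\kappa)$ and $c_2(\kappa)$ are positive; together with $K_\rho(u)>0$ and $u^{-\rho}>0$ for $u>0$ this yields $\frac{c_1(\kappa)}{c_2(\kappa)}u^{-\rho}K_\rho(u)\ge0$, hence $\theta_\kappa(u)\le1$ on $[0,\infty)$. Boundedness follows because $u\mapsto u^{-\rho}K_\rho(u)$ is continuous on $(0,\infty)$, admits the finite positive limit $\Gamma(-\rho)2^{-\rho-1}$ as $u\to0^+$ (from the small-argument form of $K_{-\rho}$), and decays to $0$ as $u\to\infty$ (from $K_\rho(u)\sim\sqrt{\pi/(2u)}\,e^{-u}$); thus it extends continuously to $[0,\infty]$ and is bounded, and so is $\theta_\kappa$.

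The core computation is the expansion (\ref{condition1}). For non-integer order I would insert the Frobenius series $K_\rho(u)=\frac{\pi}{2\sin(\rho\pi)}\big(I_{-\rho}(u)-I_\rho(u)\big)$ with $I_\rho(u)=\sum_{k\ge0}\frac{(u/2)^{2k+\rho}}{k!\,\Gamma(k+\rho+1)}$, multiply by $u^{-\rho}$, and collect powers. The admissible exponents are $u^{2k}$ and $u^{2k+1-\kappa}$, so there is no $u^1$ term and the first three contributions are a constant, a $u^{1-\kappa}$ term and a $u^2$ term. Using $\sin(\rho\pi)=-\cos(\kappa\pi/2)$, the constant equals $\frac{c_1}{c_2}\frac{\pi}{2\sin(\rho\pi)}\big(-\frac{2^{-\rho}}{\Gamma(1+\rho)}\big)$, which I would show equals $1$ (so that $\theta_\kappa(0)=0$) via the Legendre duplication formula $\Gamma(\kappa)=\frac{2^{\kappa-1}}{\sqrt\pi}\Gamma(\tfrac\kappa2)\Gamma(\tfrac{\kappa+1}{2})$ and the Euler reflection formula $\Gamma(\tfrac{1-\kappa}{2})\Gamma(\tfrac{1+\kappa}{2})=\pi/\cos(\kappa\pi/2)$. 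The same identities reduce the $u^{1-\kappa}$ coefficient to $\Gamma(\tfrac{\kappa+1}{2})/(2^{1-\kappa}\Gamma(\tfrac{3-\kappa}{2}))$, and the $u^2$ coefficient, after using $\frac{c_1}{c_2}\frac{\pi}{2\sin(\rho\pi)}=-\Gamma(1+\rho)2^{\rho}$ (from the constant-term identity) and $\Gamma(2+\rho)=(1+\rho)\Gamma(1+\rho)$, collapses to $-\frac{1}{4(1+\rho)}=-\frac{1}{2(\kappa+1)}$, matching (\ref{condition1}).

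Finally, (\ref{condition2}) is immediate on specialising to $w=0$, where $f_{\kappa,0}(\lambda)=c_1(\kappa)K_\rho(|\lambda|)|\lambda|^{\rho}$, and substituting $K_\rho(z)\sim\sqrt{\pi/(2z)}\,e^{-z}$; since $\rho-\tfrac12=\tfrac{\kappa-2}{2}$ this yields $f_{\kappa,0}(\lambda)\sim\frac{c_1(\kappa)\sqrt\pi}{\sqrt2}|\lambda|^{(\kappa-2)/2}e^{-|\lambda|}$. I expect the main obstacle to be the constant-matching in the previous step: keeping the powers of $2$, the $\cos(\kappa\pi/2)$ factors and the several Gamma arguments aligned so that the duplication and reflection formulas combine cleanly, and in particular confirming that the constant term is exactly $1$ so that $\theta_\kappa$ vanishes at the origin at the claimed rate.
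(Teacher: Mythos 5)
Your proposal is correct and follows essentially the same route as the paper: both expand $K_{\frac{\kappa-1}{2}}$ via $K_\nu=\frac{\pi}{2\sin(\nu\pi)}\left(I_{-\nu}-I_\nu\right)$, identify $\theta_\kappa$ through the relation $1-\theta_\kappa(u)=\frac{c_1(\kappa)}{c_2(\kappa)}K_{\frac{\kappa-1}{2}}(u)\,u^{\frac{1-\kappa}{2}}$, match the constants using the duplication and reflection formulas, and obtain (\ref{condition1}) and (\ref{condition2}) from the resulting series and the large-argument asymptotics of $K_\nu$. The only minor divergence is the boundedness of $\theta_\kappa$: you argue via continuity together with finite limits at $0$ and $\infty$, whereas the paper derives a uniform bound from the integral representation of $K_\nu$ — both are valid.
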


\begin{proof} By {\rm\cite[10.27.4]{NIST:DLMF}}, for non-integer values $\nu$ the modified Bessel function of the second kind can be represented as
\[K_{\nu}\left(z\right)=\frac{\pi}{2\sin\left(\nu \pi\right)}\left(I_{-\nu}\left(z\right)- I_{\nu}\left(z\right)\right),\]
and the limit is used if $\nu$ is an integer. So, the formula above is correctly defined, where $I_{\nu}(\cdot)$ is a modified Bessel function of the first kind, which is specified as follows~{\rm\cite[10.25.2]{NIST:DLMF}}
\[I_{
\nu}(z)=\left(\frac{z}{2}\right)^\nu\sum_{m=0}^\infty\frac{(z/2)^{2m}}{m!\,\Gamma(m+\nu+1)}.\]
Hence,
\begin{align}
  & K_{\nu}\left(|u|\right)|u|^{\nu}=\frac{\pi|u|^{\nu}}{2\sin\left(\nu \pi\right)}\left(I_{-\nu}\left(|u|\right)- I_{\nu}\left(|u|\right)\right) \nonumber \\ &\quad =\frac{\pi|u|^{\nu}}{2\sin\left(\nu \pi\right)}\left(\left(\frac{|u|}{2}\right)^{-\nu}\sum_{m=0}^\infty\frac{(|u|/2)^{2m}}{m!\,\Gamma(m-\nu+1)}-\left(\frac{|u|}{2}\right)^\nu\sum_{m=0}^\infty\frac{(|u|/2)^{2m}}{m!\,\Gamma(m+\nu+1)}\right) \nonumber\\
&\quad =\frac{\pi}{2\sin\left(\nu \pi\right)}\left(\frac{2^\nu}{\Gamma(1-\nu)}- \frac{|u|^{2\nu}}{2^\nu\Gamma(\nu+1)}+2^{\nu}\sum_{m=1}^\infty\frac{(|u|/2)^{2m}}{m!\,\Gamma(m-\nu+1)}-\left(\frac{|u|^2}{2}\right)^\nu\right.\nonumber\\
&\quad \left.\times\sum_{m=1}^\infty\frac{(|u|/2)^{2m}}{m!\,\Gamma(m+\nu+1)}\right)=
-\frac{\pi}{2^{\nu+1}\Gamma(\nu+1)\sin\left(\nu \pi\right)} |u|^{2\nu}\left(1-\Bigg(\frac{2^{2\nu}|u|^{-2\nu}}{\Gamma(1-\nu)}\right.\nonumber\\
&\quad \left. - \sum_{m=1}^\infty\frac{(|u|/2)^{2m-2\nu}}{m!\,\Gamma(m-\nu+1)} + \sum_{m=1}^\infty\frac{(|u|/2)^{2m}}{m!\,\Gamma(m+\nu+1)}\right)\Gamma(\nu+1)\Bigg).\nonumber
\end{align}
Thus, by (\ref{spectral0}),
\begin{align}
&f_{\kappa,w}(\lambda)=\frac{c_1(\kappa)}{2}\left(K_{\frac{\kappa-1}{2}}\left(|\lambda+w| \right)|\lambda+w|^{\frac{\kappa-1}{2}}+ K_{\frac{\kappa-1}{2}}\left(|\lambda-w| \right)|\lambda-w|^{\frac{\kappa-1}{2}} \right)\nonumber\\
&\quad =-\frac{\pi c_1(\kappa)}{2^{\frac{\kappa+1}{2}+1}\Gamma(\frac{\kappa+1}{2})\sin\left(\frac{\kappa-1}{2} \pi\right)}
\left(\frac{1-\theta_\kappa \left(|\lambda+w|\right)}{|\lambda+w|^{1-\kappa}} +\frac{1-\theta_\kappa \left(|\lambda-w| \right)}{|\lambda-w|^{1-\kappa}}\right),\nonumber
\end{align}
where
\begin{align}
\theta_\kappa (|u|) &= \Gamma\left(\frac{\kappa+1}{2}\right)\Bigg(\frac{2^{\kappa-1}}{\Gamma\left(1-{\frac{\kappa-1}{2}}\right)|u|^{\kappa-1}} + \sum_{m=1}^\infty\frac{(|u|/2)^{2m-\kappa+1}}{m!\,\Gamma(m-{\frac{\kappa-1}{2}}+1)} \nonumber\\
& -\sum_{m=1}^\infty\frac{(|u|/2)^{2m}}{m!\,\Gamma(m+{\frac{\kappa-1}{2}}+1)}\Bigg).\label{thetak}
\end{align}
By the identity  $\Gamma (z)\Gamma \left(z+{\frac{1}{2}}\right)=2^{1-2z}{\sqrt {\pi }}\,\Gamma (2z),$ it follows that
\[-\frac{\pi c_1(\kappa)}{2^{\frac{\kappa+1}{2}+1}\Gamma(\frac{\kappa+1}{2})\sin\left(\frac{\kappa-1}{2} \pi\right)}=\frac{\sqrt{\pi}}{2^{\kappa+1}\Gamma(\frac{\kappa+1}{2})\Gamma(\frac{\kappa}{2})\cos\left(\frac{\kappa}{2} \pi\right)}=\frac{c_2(\kappa)}{2},\]
which completes the proof of (\ref{spectral}).

From $ K_{\nu}\left(|u|\right)\geq 0,$ $u\in[0,\infty),$ and
\[1-\theta_\kappa(|u|)=\frac{c_1(\kappa)}{c_2(\kappa)}K_{\frac{\kappa-1}{2}}\left(|u| \right)|u|^{\frac{1-\kappa}{2}}\]
it follows that $\theta_\kappa(u)\leq 1$ on $u\in[0,\infty).$

By the integral representation {\rm\cite[10.32.8]{NIST:DLMF}}
\[K_{\frac{\kappa-1}{2}}\left(|u| \right)|u|^{\frac{1-\kappa}{2}} =\frac{\sqrt{\pi}}{2^{\frac{\kappa-1}{2}} \Gamma\left(\frac{\kappa}{2}\right)}\int_1^{+\infty}\frac{e^{-|u|t}}{(t^2-1)^{1-\frac{\kappa}{2}}}dt \le \frac{\sqrt{\pi}}{2^{\frac{\kappa-1}{2}} \Gamma\left(\frac{\kappa}{2}\right)}\int_1^{+\infty}\frac{1}{(t^2-1)^{1-\frac{\kappa}{2}}}dt,\]
where the last integral is finite, which proves the boundedness of $\theta_\kappa(u)$ on $u\in[0,\infty).$

By rewriting (\ref{thetak}) one obtains
\begin{align}\theta_\kappa (|u|) &= \Gamma\left(\frac{\kappa+1}{2}\right)\cdot\frac{\left({|u|/2}\right)^{1-\kappa}}{\Gamma\left(\frac{3-\kappa}{2}\right)}  - \frac{|u|^{2}}{2(\kappa+1) } +\Gamma\left(\frac{\kappa+1}{2}\right)\Bigg(\sum_{m=1}^\infty\frac{(|u|/2)^{2m-\kappa+1}}{m!\,\Gamma(m-{\frac{\kappa-1}{2}}+1)}\nonumber\\
& - \sum_{m=2}^\infty\frac{(|u|/2)^{2m}}{m!\,\Gamma(m+{\frac{\kappa-1}{2}}+1)}\Bigg),\nonumber
\end{align}
which gives the asymptotic in (\ref{condition1}).

Using the asymptotic behaviour of the modified Bessel function of the second kind for large values of its argument, see {\rm\cite[10.25.3]{NIST:DLMF},} one obtains
\[
 f_{\kappa,0}(\lambda)  ={c_1(\kappa)}K_{\frac{\kappa-1}{2}}\left(|\lambda| \right)|\lambda|^{\frac{\kappa-1}{2}}\sim \frac{c_{1}(\kappa)\sqrt{\pi}}{\sqrt{2}} |\lambda|^{\frac{\kappa-2}{2}} e^{-|\lambda|},\quad |\lambda| \to \infty,
\]
which gives (\ref{condition2}) and completes the proof.
\end{proof}
\begin{Remark}
Lemma~{\rm\ref{lem1}} show that in this case, we have a generalisation of Condition~D${}^\prime.$ It demonstrates that the spectral density exhibits two singularities at the points $\pm\omega$ with a power-law type of $1-\kappa.$ Additionally, it provides the rate of deviation from this power-law behaviour as $\lambda \to \pm\omega.$
\end{Remark}
Note that the process $\eta(x)$ has the spectral representation
\[\eta(x)=\int_{\mathbb R} e^{i\lambda x}\sqrt{f_{\kappa,\omega}(\lambda)}W(d\lambda),\]
where $W(\cdot)$ is the complex white noise Gaussian random measure, and $f_{\kappa,\omega}(\cdot)$ is given by~{\rm(\ref{spectral})}.
\begin{theorem}\label{the3} Consider the random field $u(t,x),$ $t>0,$ $x\in \mathbb  R$ defined by {\rm (\ref{convolution})} with $\eta(x),$ $ x\in \mathbb  R,$ that has a covariance function satisfying Condition~D${}^{\prime\prime}.$   If $\varepsilon \to 0,$ then the finite-dimensional distributions of the random fields
    \begin{equation*}
        U_{\varepsilon}(t,x)=\varepsilon^{-1/4}\,u\left(\frac{t}{\varepsilon}, \frac{x}{\sqrt{\varepsilon}}\right),
    \end{equation*}
    converge weakly to the finite-dimensional distributions of the zero-mean Gaussian random field
   \begin{equation}\label{*1}
       U_{0}(t,x)=\sqrt{\frac{c_2(\kappa)}{w^{1-\kappa}}(1-\theta_\kappa(|w|))}\int_{\mathbb R}e^{i\lambda x-\mu t\lambda^{2}}W(d\lambda),
   \end{equation}
 with the covariance function
 \begin{align}\label{cov3}
     Cov( U_{0}&(t,x), U_{0}(t',x'))= \mathbb {E} U_{0}(t,x)U_{0}(t',x') =\frac{c_{2}(\kappa)\sqrt{\pi}(1-\theta_\kappa(|w|))}{\sqrt{\mu}w^{1-\kappa}}\cdot\frac{e^{-\frac{(x-x')^2}{4\mu(t+t')}}}{\sqrt{t+t'}}.
 \end{align}
\end{theorem}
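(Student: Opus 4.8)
The plan is to use that the initial condition has Hermite rank $m=1$, so that $u(t,x)$ in (\ref{convolution}) is a linear functional of the Gaussian random measure $Z$; hence $U_\varepsilon(t,x)$ is a zero-mean Gaussian field, and the limit $U_0$ in (\ref{*1}) is Gaussian as well. By the Cramér–Wold device, the weak convergence of finite-dimensional distributions then reduces to the convergence, for every pair $(t,x)$ and $(t',x')$, of the covariances $\Cov(U_\varepsilon(t,x),U_\varepsilon(t',x'))$ to the covariance (\ref{cov3}) of $U_0$. Both fields are well defined in $L_2(\Omega)$ since $f_{\kappa,w}\in L_1(\mathbb R)$ with $\int_{\mathbb R} f_{\kappa,w}(\lambda)\,d\lambda=r_{\kappa,w}(0)=1$, and the Gaussian kernel in (\ref{*1}) is square integrable.

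First I would rescale the covariance formula for $u$ stated after (\ref{convolution}):
\[
\Cov(U_\varepsilon(t,x),U_\varepsilon(t',x'))=\varepsilon^{-1/2}\int_{\mathbb R} e^{i\lambda(x-x')/\sqrt\varepsilon-\mu\lambda^2(t+t')/\varepsilon}\,f_{\kappa,w}(\lambda)\,d\lambda.
\]
The substitution $\lambda=\sqrt\varepsilon\,\nu$ turns this into
\[
I_\varepsilon:=\int_{\mathbb R} e^{i\nu(x-x')-\mu\nu^2(t+t')}\,f_{\kappa,w}(\sqrt\varepsilon\,\nu)\,d\nu.
\]
Because the singularities of $f_{\kappa,w}$ sit at $\pm w\neq 0$, the function is continuous at the origin, so for each fixed $\nu$ we have $f_{\kappa,w}(\sqrt\varepsilon\,\nu)\to f_{\kappa,w}(0)=c_2(\kappa)(1-\theta_\kappa(|w|))/|w|^{1-\kappa}$ by (\ref{spectral}). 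The candidate limit is therefore $I_0:=f_{\kappa,w}(0)\int_{\mathbb R} e^{i\nu(x-x')-\mu\nu^2(t+t')}\,d\nu$, and evaluating this Gaussian Fourier integral reproduces exactly the right-hand side of (\ref{cov3}); note that $f_{\kappa,w}(0)$ equals the square of the prefactor in (\ref{*1}).

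The main obstacle is that this pointwise convergence cannot be promoted to $I_\varepsilon\to I_0$ by a direct dominated convergence argument: the singularities of $\nu\mapsto f_{\kappa,w}(\sqrt\varepsilon\,\nu)$ are located at $\nu=\pm w/\sqrt\varepsilon$ and drift to $\pm\infty$ as $\varepsilon\to0$, so $\sup_{0<\varepsilon\le1}f_{\kappa,w}(\sqrt\varepsilon\,\nu)=\infty$ for every $|\nu|\ge|w|$, and no $\varepsilon$-independent integrable majorant exists. I would resolve this by splitting $\mathbb R$ at the threshold $|\nu|=(|w|-\delta)/\sqrt\varepsilon$ with a fixed $\delta\in(0,|w|)$. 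On the inner region $\sqrt\varepsilon|\nu|\le|w|-\delta$ the argument $\sqrt\varepsilon\,\nu$ stays in the compact set $[-(|w|-\delta),\,|w|-\delta]$ away from the singularities, whence $f_{\kappa,w}(\sqrt\varepsilon\,\nu)\le M_\delta:=\max_{|y|\le|w|-\delta}f_{\kappa,w}(y)<\infty$; then $M_\delta\,e^{-\mu\nu^2(t+t')}$ is an $\varepsilon$-independent integrable majorant, the indicator of the inner region tends to $1$, and dominated convergence yields the full contribution $I_0$.

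It remains to show the outer region is asymptotically negligible. Reverting the substitution, its contribution is bounded in modulus by
\[
\varepsilon^{-1/2}\int_{|\lambda|>|w|-\delta} e^{-\mu\lambda^2(t+t')/\varepsilon}\,f_{\kappa,w}(\lambda)\,d\lambda\le \varepsilon^{-1/2}\,e^{-\mu(|w|-\delta)^2(t+t')/\varepsilon}\int_{\mathbb R} f_{\kappa,w}(\lambda)\,d\lambda,
\]
and since $\int_{\mathbb R} f_{\kappa,w}=1$ and $t+t'>0$, this expression tends to $0$ as $\varepsilon\to0$. Combining the two regions gives $I_\varepsilon\to I_0$, which is the required convergence of covariances and, by the Gaussian reduction of the first step, proves the stated convergence of finite-dimensional distributions. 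Throughout, the decisive feature is the competition between the drifting (integrable) singularities of the rescaled spectral density and the super-exponential decay of the heat kernel, which is exactly what invalidates the naive dominated convergence theorem and forces the region-splitting.
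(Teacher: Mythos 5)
Your argument is correct, and it reaches the same analytic heart as the paper's proof (the drifting singularities at $\pm w/\sqrt{\varepsilon}$ defeating naive dominated convergence, resolved by a region split), but the packaging is genuinely different. The paper couples $U_{\varepsilon}$ and $U_{0}$ on the same white noise and proves mean-square convergence $\mathbb{E}|U_{\varepsilon}(t,x)-U_{0}(t,x)|^{2}\to 0$, invoking the \emph{generalized} dominated convergence theorem with the $\varepsilon$-dependent dominating sequence $Q_{\varepsilon}(\lambda)+1$, and it splits the problematic integral at the shrinking threshold $|u|\le\varepsilon^{1/2-\delta}$ around the origin. You instead exploit that for $m=1$ every finite-dimensional distribution of $U_{\varepsilon}$ is exactly zero-mean Gaussian, so by Cram\'er--Wold it suffices to prove convergence of covariances; you then split at a \emph{fixed} distance $\delta$ from the singularities, use the ordinary dominated convergence theorem on the inner region (where $f_{\kappa,w}$ is bounded by $M_\delta$), and kill the outer region with the explicit bound $\varepsilon^{-1/2}e^{-\mu(|w|-\delta)^{2}(t+t')/\varepsilon}\to 0$. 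Your route is more elementary: it avoids the Brownian scaling identity, the coupling, and the generalized DCT, and the fixed-$\delta$ split is arguably cleaner than the paper's $\varepsilon^{1/2-\delta}$ split. What the paper's approach buys in exchange is a stronger mode of convergence (mean-square convergence of the coupled fields rather than only distributional convergence) and a template that does not rely on exact Gaussianity of the prelimit, which matters for the subordinated ($m>1$) extensions discussed later where covariance convergence alone would not determine the limit law. One minor point to make explicit in your write-up: the reduction to covariances needs the joint Gaussianity of the vector $(U_{\varepsilon}(t_{1},x_{1}),\dots,U_{\varepsilon}(t_{n},x_{n}))$, which holds here because $u$ is a deterministic linear functional of the Gaussian measure $Z$; you state this, and it is correct.
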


\begin{proof}
   It holds in the sense of the finite-dimensional distributions that
\[ U_{\varepsilon}(t,x) = \frac{1}{\varepsilon^{1/4}}\int_{\mathbb{R}} e^{i\lambda \frac{x}{\sqrt{\varepsilon}}-\mu \lambda^{2}{t}/{\varepsilon}}Z(d\lambda) = \frac{1}{\varepsilon^{1/4}}\int_{\mathbb{R}} e^{i\lambda \frac{x}{\sqrt{\varepsilon}}-\mu \lambda^{2}{t}/{\varepsilon}}\sqrt{f(\lambda)}W(d\lambda). \]
By (\ref{spectral}), the change of variables $\lambda = \tilde{\lambda}\sqrt{\varepsilon}$ and using  the Brownian scaling property $W(d(\tilde{\lambda}\varepsilon))\overset{d}{=} \sqrt{\varepsilon} W(d\tilde{\lambda}),$ one obtains
    \begin{align}\label{*}
   U_{\varepsilon}(t,x) &\overset{d}{=}\frac{1}{\varepsilon^{1/4}}\int_{\mathbb{R}} e^{i\tilde{\lambda}x-\mu\tilde{\lambda}^{2}t}\left(f(\tilde{\lambda}\sqrt{\varepsilon})\right)^{1/2}W(d\tilde{\lambda}\sqrt{\varepsilon}) \nonumber \\
    \overset{d}{=}&\int_{\mathbb{R}} e^{i\tilde{\lambda}x-\mu \tilde{\lambda^2}t}\left(f(\tilde{\lambda}\sqrt{\varepsilon})\right)^{1/2}W(d\tilde{\lambda}) \nonumber \\
    \overset{d}{=}&\int_{\mathbb{R}} e^{i\tilde{\lambda}x-\mu \tilde\lambda^{2}t}\sqrt{\frac{c_2(\kappa)}{2}\left(\frac{1-\theta_\kappa(|\tilde{\lambda}\sqrt{\varepsilon}+w|)}{|\tilde{\lambda}\sqrt{\varepsilon}+w|^{1-\kappa}}+\frac{1-\theta_\kappa(|\tilde{\lambda}\sqrt{\varepsilon}-w|)}{|\tilde{\lambda}\sqrt{\varepsilon}-w|^{1-\kappa}}\right)}W(d\tilde{\lambda}).
\end{align}
It follows from (\ref{*1}) and (\ref{*}) that
    \begin{align*}
R(t&,x) = \mathbb {E}\left(U_{\varepsilon}(t,x)-U_{0}(t,x)\right)^{2} \nonumber \\
&= \mathbb {E}\Bigg(\int_{\mathbb{R}} e^{i{\lambda}x-\mu {\lambda}^{2}t}\sqrt{\frac{c_2(\kappa)}{2}\left(\frac{1-\theta_\kappa(|{\lambda}\sqrt{\varepsilon}+w|)}{|{\lambda}\sqrt{\varepsilon}+w|^{1-\kappa}}+\frac{1-\theta_\kappa(|{\lambda}\sqrt{\varepsilon}-w|)}{|{\lambda}\sqrt{\varepsilon}-w|^{1-\kappa}}\right)}W(d{\lambda}) \nonumber \\
& -\sqrt{\frac{c_2(\kappa)}{w^{1-\kappa}}(1-\theta_\kappa(|w|))}\int_{\mathbb{R}}e^{i\lambda x-\mu t\lambda^{2}}W(d{\lambda})\Bigg)^{2} = \frac{c_2(\kappa)}{w^{1-\kappa}}(1-\theta_\kappa(|w|))\nonumber \\
& \times \int_{\mathbb{R}} e^{-2\mu {\lambda}^{2}t} \left(\sqrt{\frac{w^{1-\kappa}}{2-2\theta_\kappa(|w|)}\left(\frac{1-\theta_\kappa(|{\lambda}\sqrt{\varepsilon}+w|)}{|{\lambda}\sqrt{\varepsilon}+w|^{1-\kappa}}+\frac{1-\theta_\kappa(|{\lambda}\sqrt{\varepsilon}-w|)}{|{\lambda}\sqrt{\varepsilon}-w|^{1-\kappa}}\right)} -1\right)^{2} d{\lambda}\nonumber \\
& = \frac{c_2(\kappa)}{w^{1-\kappa}}(1-\theta_\kappa(|w|))\int_{\mathbb{R}} e^{-2\mu{\lambda}^{2}t}\left(\sqrt{Q_{\varepsilon}({\lambda})}-1\right)^{2} d{\lambda},
\end{align*}
where $Q_{\varepsilon}({\lambda}):=\frac{w^{1-\kappa}}{2-2\theta_\kappa(|w|)}\left(\frac{1-\theta_\kappa(|{\lambda}\sqrt{\varepsilon}+w|)}{|{\lambda}\sqrt{\varepsilon}+w|^{1-\kappa}}+\frac{1-\theta_\kappa(|{\lambda}\sqrt{\varepsilon}-w|)}{|{\lambda}\sqrt{\varepsilon}-w|^{1-\kappa}}\right).$

By continuity properties of the function $\theta_\kappa(\cdot),$  for each $\omega>0$ and $\lambda \in \mathbb{R}$ it holds
\[
 \lim_{\varepsilon \to 0}Q_{\varepsilon}({\lambda})=\lim_{\varepsilon \to 0}\frac{w^{1-\kappa}}{2-2\theta_\kappa(|w|)}\left(\frac{1-\theta_\kappa(|{\lambda}\sqrt{\varepsilon}+w|)}{|{\lambda}\sqrt{\varepsilon}+w|^{1-\kappa}}+\frac{1-\theta_\kappa(|{\lambda}\sqrt{\varepsilon}-w|)}{|{\lambda}\sqrt{\varepsilon}-w|^{1-\kappa}}\right)=1.
 \]
 Note that $\sqrt{Q_{\varepsilon}({\lambda})}-1$ converges pointwise to 0 and $(\sqrt{Q_{\varepsilon}({\lambda})}-1)^2\le Q_{\varepsilon}({\lambda})+1.$ Then,  one can apply the generalized Lebesgue's dominated convergence theorem~\mbox{\cite[p.59]{Folland1999}}. To justify its conditions, one needs to show that
\begin{equation}\label{gdt} \lim_{\varepsilon\to 0}\int_{\mathbb{R}} e^{-2\mu{\lambda}^{2}t} (Q_{\varepsilon}({\lambda})+1) d{\lambda} =\int_{\mathbb{R}} e^{-2\mu{\lambda}^{2}t} \lim_{\varepsilon\to 0} (Q_{\varepsilon}({\lambda})+1) d{\lambda}<\infty.
\end{equation}

 The last inequality in (\ref{gdt}) follows from the boundedness of the spectral density at zero
 \[\int_{\mathbb{R}} e^{-2\mu{\lambda}^{2}t} \lim_{\varepsilon\to 0} (Q_{\varepsilon}({\lambda})+1) d{\lambda} = (Q_{0}({0})+1) \int_{\mathbb{R}} e^{-2\mu{\lambda}^{2}t}d{\lambda} <\infty.\]

Now, let us consider the integral on the left-hand side of (\ref{gdt})
\begin{align}\label{int2}
\int_{\mathbb{R}} e^{-2\mu{\lambda}^{2}t} (Q_{\varepsilon}({\lambda})& +1) d{\lambda} = \frac{w^{1-\kappa}}{2-2\theta_\kappa(|w|)}\left(\int_{\mathbb{R}} e^{-2\mu{\lambda}^{2}t}\frac{1-\theta_\kappa(|{\lambda}\sqrt{\varepsilon}+w|)}{|{\lambda}\sqrt{\varepsilon}+w|^{1-\kappa}}d\lambda\nonumber \right.\\
& \left.+\int_{\mathbb{R}} e^{-2\mu{\lambda}^{2}t}\frac{1-\theta_\kappa(|{\lambda}\sqrt{\varepsilon}-w|)}{|{\lambda}\sqrt{\varepsilon}-w|^{1-\kappa}}d\lambda\right)+\int_{\mathbb{R}} e^{-2\mu{\lambda}^{2}t} d{\lambda}\nonumber\\
 &=:\frac{w^{1-\kappa}}{2-2\theta_\kappa(|w|)}(I_1(\varepsilon)+I_2(\varepsilon))+\int_{\mathbb{R}} e^{-2\mu{\lambda}^{2}t}  d{\lambda}.
 \end{align}

Due to the symmetry, we will study only $I_2(\varepsilon).$ Let us introduce the change of variables $u:={\lambda}\sqrt{\varepsilon}$ and split the integration  in $I_2(\varepsilon)$ into two regions, $|u| \leq \varepsilon^{1/2-\delta}$ and $|u| > \varepsilon^{1/2-\delta},$ $ \delta\in (0,1/2):$
\begin{align}\label{**}
    I_2(\varepsilon) = \varepsilon^{-1/2}\int_{|u| \leq \varepsilon^{1/2-\delta}} e^{-2\mu t \frac{u^2}{\varepsilon}} \,\frac{1-\theta_\kappa(|u-\omega|)}{|u-\omega|^{1-\kappa}} du \nonumber\\ + \varepsilon^{-1/2}\int_{|u| > \varepsilon^{1/2-\delta}} e^{-2\mu t \frac{u^2}{\varepsilon}} \,\frac{1-\theta_\kappa(|u-\omega|)}{|u-\omega|^{1-\kappa}} du.
\end{align}
The first integral in (\ref{**}) can be bounded  as
\[\frac{1-\theta_\kappa(\omega)}{\omega^{1-\kappa}}\int_{|u| \leq \varepsilon^{1/2-\delta}} \frac{e^{-2\mu t \frac{u^2}{\varepsilon}}}{\varepsilon^{1/2}} du \le \varepsilon^{-1/2}\int_{|u| \leq \varepsilon^{1/2-\delta}} e^{-2\mu t \frac{u^2}{\varepsilon}} \,\frac{1-\theta_\kappa(|u-\omega|)}{|u-\omega|^{1-\kappa}} du = \]
\begin{equation}\label{bound}
\leq \frac{1-\theta_\kappa(|\varepsilon^{1/2-\delta}-\omega|)}{|\varepsilon^{1/2-\delta}-\omega|^{1-\kappa}}\int_{|u| \leq \varepsilon^{1/2-\delta}} \frac{e^{-2\mu t \frac{u^2}{\varepsilon}}}{\varepsilon^{1/2}} du.
\end{equation}
Noting that
\[\int_{|u| \leq \varepsilon^{1/2-\delta}} \frac{e^{-2\mu t \frac{u^2}{\varepsilon}}}{\varepsilon^{1/2}}du=\int_{|\lambda| \leq \varepsilon^{-\delta}} {e^{-2\mu t {\lambda^2}}}d\lambda,\]
one obtains from (\ref{bound}) that
\begin{equation}\label{Q0}
\lim_{\varepsilon \to 0} \varepsilon^{-1/2}\int_{|u| \leq \varepsilon^{1/2-\delta}} e^{-2\mu t \frac{u^2}{\varepsilon}}
\frac{1-\theta_\kappa(|u-\omega|)}{|u-\omega|^{1-\kappa}} du = Q_{0}({0}) \int_{\mathbb{R}} e^{-2\mu{\lambda}^{2}t}d{\lambda}.
\end{equation}
The second integral in (\ref{**}) can be estimated as
\[0<\varepsilon^{-1/2}\int_{|u| > \varepsilon^{1/2-\delta}} e^{-2\mu t \frac{u^2}{\varepsilon}} \,\frac{1-\theta_\kappa(|u-\omega|)}{|u-\omega|^{1-\kappa}} du\le \frac{e^{-2\mu t \varepsilon^{-2\delta}}}{\varepsilon^{1/2}}\int_{|u| > \varepsilon^{1/2-\delta}} \,\frac{1-\theta_\kappa(|u-\omega|)}{|u-\omega|^{1-\kappa}} du\]
\begin{equation}\label{estint2}
\le \frac{e^{-2\mu t \varepsilon^{-2\delta}}}{\varepsilon^{1/2}}\int_{\mathbb{R}} \,\frac{1-\theta_\kappa(|u-\omega|)}{|u-\omega|^{1-\kappa}} du \to 0,\quad \mbox{when}\quad \varepsilon\to 0,
\end{equation}
as due to the integrability of the spectral density, the integral in (\ref{estint2}) is finite.

Hence, (\ref{gdt}) follows from the results (\ref{int2}), (\ref{**}), (\ref{Q0}) and (\ref{estint2}). Therefore, by the generalized  dominated convergence theorem, $\lim_{\varepsilon \to 0}E{\vert U_{\varepsilon}(t,x)-U_{0}(t,x)\vert}^{2}=0,$ which implies the convergence of finite-dimensional distributions.

Finally, let's proof the equation {(\rm\ref{cov3})}:
\begin{align*}
& Cov(U_{0}(t,x), U_{0}(t',x'))
 =\mathbb {E}\left(\int_{\mathbb R^{2}} \frac{c_{2}(\kappa)}{w^{1-\kappa}}(1-\theta_\kappa(|w|)) e^{i\lambda_{1}x-\mu t\lambda_{1}^{2}} e^{-i\lambda_{2}x'-\mu t'\lambda^2_{2}}W(d\lambda_{1})W(d\lambda_{2})\right)
     \nonumber\\
     &\quad \quad  =\frac{c_{2}(\kappa)}{w^{1-\kappa}}(1-\theta_\kappa(|w|))\int_\mathbb R e^{i\lambda(x-x')}e^{-\mu \lambda^{2}(t+t')} d\lambda =\frac{c_{2}(\kappa)}{w^{1-\kappa}}(1-\theta_\kappa(|w|))\nonumber\\
     &\quad \quad\times\int_{\mathbb R}\cos\left(\lambda(x-x')\right)e^{-\mu(t+t')\lambda^{2}}d\lambda=\frac{c_{2}(\kappa)\sqrt{\pi}(1-\theta_\kappa(|w|))}{\sqrt{\mu}w^{1-\kappa}}\cdot\frac{e^{-\frac{(x-x')^2}{4\mu(t+t')}}}{\sqrt{t+t'}},
\end{align*}
where the last integral was initially computed using Maple software and subsequently verified via \cite[Example 1.16]{henner2009mathematical}.
\end{proof}
\begin{Remark}
It follows from the expression {\rm(\ref{cov3})} for the covariance function that the limit field $U_{0}(t,x)$ is stationary in space but non-stationary in time.
\end{Remark}
 \begin{example}
Figure {\rm\ref{figure2}} shows the covariance function given by {\rm(\ref{cov3})}. To visualise the result as a 3D plot, the variables $\delta=x-x'$ and $t+t'$ were used. The value of $\mu=1$ was selected. Without loss of generality, the other parameters were selected in such a way that the constant multiplier in {\rm(\ref{cov3})} is equal to~1. The covariance takes very large values when differences in spatial coordinates are small and the time is near zero. It sharply decreases to zero as either the temporal interval or the spatial distance between locations increases.
\begin{figure}[tb]
\centering
\includegraphics[width=0.7\linewidth, trim={ 0 0 0 2cm},clip]{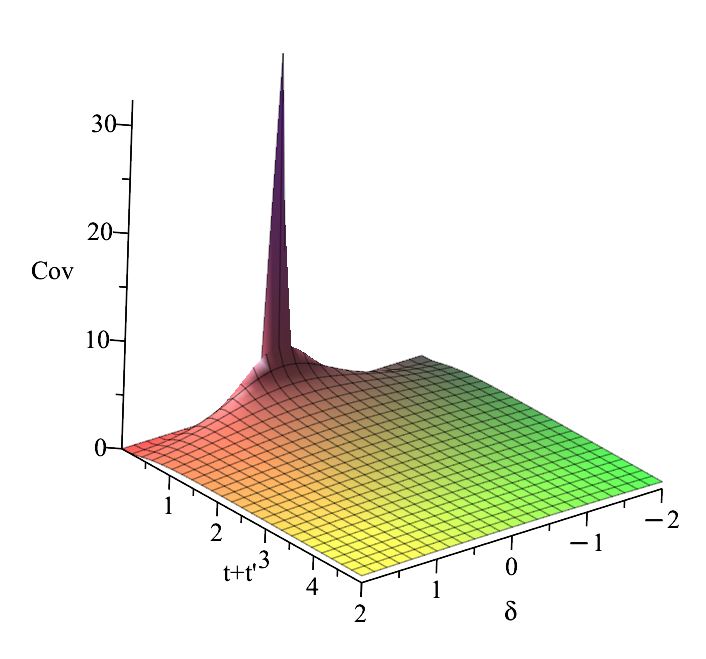}
\caption{ The plot of covariance function {\rm(\ref{cov3})} for $\delta$ and $t+t'.$}
\label{figure2}
\end{figure}
\end{example}

\begin{example}
  Figure {\rm\ref{fig4}} shows two realisations of the random field $U_{0}(t,x).$ To obtain these realizations the following approximation of the stochastic integral in {\rm(\ref{*1})} was used
  \[\int_{\mathbb R} e^{i\lambda x-\mu t \lambda^2} W(d\lambda) \approx \sum _{\lambda_j}e^{i\lambda_{j} x-\mu t \lambda_{j}^2} W(\Delta\lambda_{j}),\]
  where $\Delta\lambda_j= 0.05,$ the sequence of $\lambda_{j}$ was defined as $\{\lambda_j\}=\{-N\times\Delta, -(N-1)\times\Delta,...,0,...,N\times\Delta, (N-1)\times\Delta\}$ and  $N=100.$ Simulation studies confirmed that this width and the number of intervals were sufficient for a close approximation of the field.
  The simulated random fields demonstrated that near $t=0,$ the fields exhibit varying behaviours due to different realizations of random initial conditions. However, as time progresses, the fields decay to zero values.
\begin{figure}[tb]
    \centering
    \subfloat{{\includegraphics[trim=6cm 6cm 5cm 2cm, clip, width=0.48\linewidth]{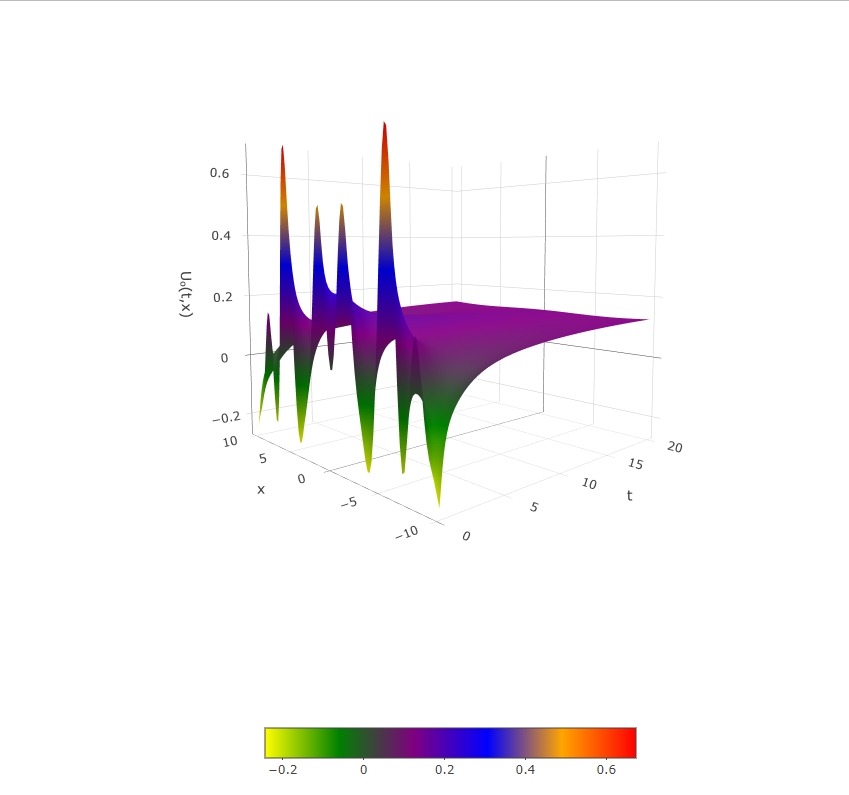}}}
    \subfloat{{\includegraphics[trim=6cm 6cm 5cm 2cm, clip, width=0.48\linewidth]{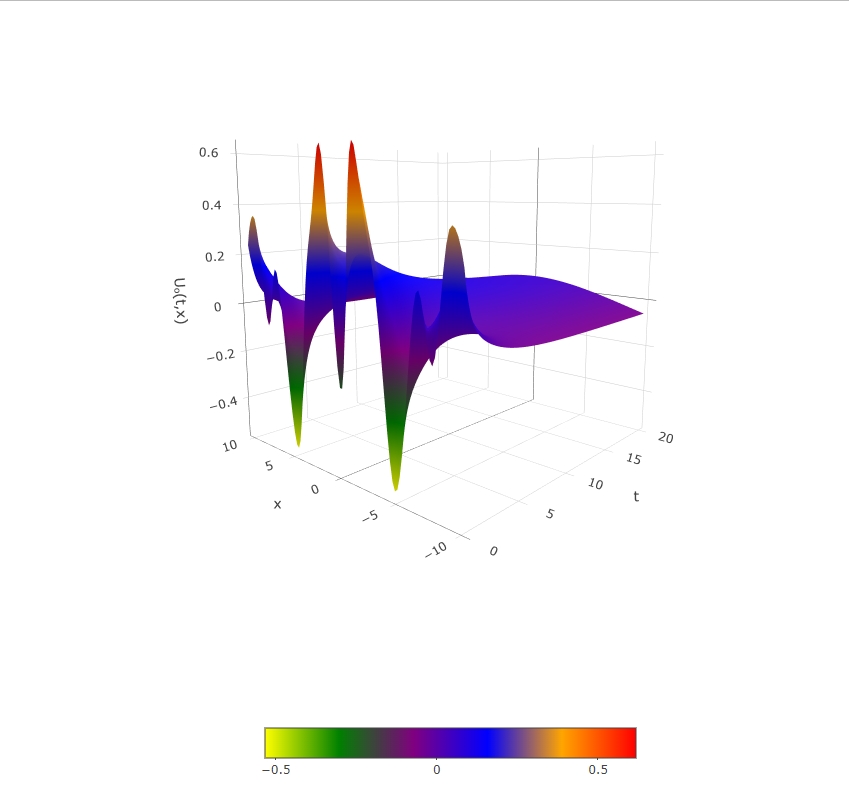}}}
    \caption{Two realizations of $U_{0}(t,x)$.}
    \label{fig4}
\end{figure}
\end{example}

\section{Multiscaling limits for Fractional Riesz-Bessel random PDEs with cyclic long-range dependent initial conditions}\label{sec4}
In this section, we study FRBEs defined by (\ref{Riesz-Bessel equation}) with the initial conditions specified by~(\ref{initial condition}) and~(\ref{covariance of our problem}). In this case, we demonstrate that the analogues of the results in  Section~\ref{sec3} hold true. Similar to Section \ref{sec3}, in this section $m=1$ and $h(x)\equiv x$. For simplicity, we concentrate
on the one-dimensional case with~$d = 1.$


\begin{theorem}\label{the4} Consider the random field $u(t,x),$ $t>0,$ $x\in\mathbb R,$ defined by the  FRBE~{\rm (\ref{Riesz-Bessel equation})} with  $\alpha>1/2$, and the random initial condition $\eta (x),$ $x\in \mathbb R,$ in {\rm(\ref{initial condition})}, that has a covariance function satisfying Condition D${}^{\prime\prime}$. If $\varepsilon\to 0,$ then the finite-dimensional distribution of the random fields
\begin{equation*}
    U_{\varepsilon}(t,x)=\frac{1}{\varepsilon^{\beta/2\alpha}}u\left(\frac{t}{\varepsilon},\frac{x}{\varepsilon^{\beta/\alpha}}\right),
\end{equation*}
converge weakly to the finite-dimensional distribution of the zero mean Gaussian random field
\begin{equation}\label{U_{0}}
    U_{0}(t,x)=\sqrt{\frac{c_{2}(\kappa)}{|w|^{1-\kappa}}(1-\theta_\kappa(|w|))}\int_{\mathbb R}e^{i\lambda x} E_{\beta}\left(-\mu t^{\beta}|\lambda|^{\alpha}\right)W(d\lambda),
\end{equation}
with the covariance function
\begin{align}
  &cov\left(U_{0}(t,x), U_{0}(t',x')\right)\nonumber\\
  &=\frac{2c_{2}(\kappa)(1-\theta_\kappa(|w|))}{|w|^{1-\kappa}}\int_{0}^{
  +\infty}\cos\lambda(x-x')E_{\beta}(-\mu t^{\beta}|\lambda|^{\alpha})E_{\beta}(-\mu t'^{\beta}|\lambda|^{\alpha})d\lambda.\label{cov of Bessel}
\end{align}
\end{theorem}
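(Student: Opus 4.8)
The plan is to mirror the proof of Theorem~\ref{the3}, adapting it to the Riesz--Bessel structure through the monotonicity and decay of the Mittag--Leffler function. First I would insert the rescaled arguments $(t/\varepsilon,\,x/\varepsilon^{\beta/\alpha})$ into the spectral representation~(\ref{spectral 1}) with $d=1$, perform the change of variables $\lambda=\tilde\lambda\,\varepsilon^{\beta/\alpha}$, and use the Brownian scaling $W(d(\tilde\lambda\,\varepsilon^{\beta/\alpha}))\overset{d}{=}\varepsilon^{\beta/2\alpha}W(d\tilde\lambda)$, so that the normalisation $\varepsilon^{-\beta/2\alpha}$ cancels exactly and
\[
U_\varepsilon(t,x)\overset{d}{=}\int_{\mathbb R}e^{i\tilde\lambda x}\,E_\beta\!\Big(-\mu t^\beta|\tilde\lambda|^\alpha\big(1+\tilde\lambda^2\varepsilon^{2\beta/\alpha}\big)^{\gamma/2}\Big)\sqrt{f_{\kappa,w}(\tilde\lambda\,\varepsilon^{\beta/\alpha})}\,W(d\tilde\lambda).
\]
For each fixed $\tilde\lambda$ the Bessel factor $(1+\tilde\lambda^2\varepsilon^{2\beta/\alpha})^{\gamma/2}\to 1$, and by Lemma~\ref{lem1} one has $f_{\kappa,w}(\tilde\lambda\,\varepsilon^{\beta/\alpha})\to c(\kappa)(1-\theta(|w|))/|w|^{1-\kappa}=:c_0$, since the rescaled argument tends to $0$ and both spectral singularities at $\pm w$ are approached symmetrically. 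This singles out the candidate limit field~(\ref{U_{0}}).

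Next I would establish $L^2$-convergence by showing $R(t,x):=\mathbb E\big(U_\varepsilon(t,x)-U_0(t,x)\big)^2\to 0$. By the isometry this equals $\int_{\mathbb R}|A_\varepsilon(\lambda)-A_0(\lambda)|^2\,d\lambda$, where $A_\varepsilon$ and $A_0$ denote the spectral amplitudes above. Adding and subtracting $E_\beta(-\mu t^\beta|\lambda|^\alpha)\sqrt{f_{\kappa,w}(\lambda\varepsilon^{\beta/\alpha})}$ splits the integrand into a term measuring the discrepancy of the Mittag--Leffler arguments (the vanishing Bessel factor) and a term measuring the discrepancy of the densities; both tend to $0$ pointwise. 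The key simplification is that $E_\beta(-u)$ is non-increasing on $[0,\infty)$, so that $E_\beta(-\mu t^\beta|\lambda|^\alpha(1+\lambda^2\varepsilon^{2\beta/\alpha})^{\gamma/2})\le E_\beta(-\mu t^\beta|\lambda|^\alpha)$; this furnishes the fixed, $\varepsilon$-free weight $E_\beta^2(-\mu t^\beta|\lambda|^\alpha)$ that plays here the role of $e^{-2\mu\lambda^2 t}$ in Theorem~\ref{the3}. Writing $Q_\varepsilon(\lambda):=f_{\kappa,w}(\lambda\varepsilon^{\beta/\alpha})/c_0\to 1$, both terms are dominated by $C\,E_\beta^2(-\mu t^\beta|\lambda|^\alpha)(Q_\varepsilon(\lambda)+1)$, and it remains to apply the generalized dominated convergence theorem~\cite[p.59]{Folland1999}, for which I must verify $\int_{\mathbb R}E_\beta^2(-\mu t^\beta|\lambda|^\alpha)(Q_\varepsilon(\lambda)+1)\,d\lambda\to\int_{\mathbb R}E_\beta^2(-\mu t^\beta|\lambda|^\alpha)(Q_0(\lambda)+1)\,d\lambda<\infty$.

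The hard part will be this last convergence of the envelope integrals, precisely because the rescaled density $f_{\kappa,w}(\lambda\varepsilon^{\beta/\alpha})$ carries singularities at $\lambda=\pm w\,\varepsilon^{-\beta/\alpha}$ that escape to infinity as $\varepsilon\to0$, so no $\varepsilon$-uniform integrable majorant exists and the classical dominated convergence theorem is unavailable. Following the device used for Theorem~\ref{the3}, I would substitute $u=\lambda\varepsilon^{\beta/\alpha}$ and split the domain into a central region $|u|\le\varepsilon^{\beta/\alpha-\delta}$ and an outer region $|u|>\varepsilon^{\beta/\alpha-\delta}$ with $\delta\in(0,\beta/\alpha)$. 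On the central region the density is essentially constant and, after undoing the substitution, contributes $\int_{\mathbb R}E_\beta^2(-\mu t^\beta|\lambda|^\alpha)\,d\lambda$ in the limit; on the outer region, which contains the migrating singularities, the polynomial decay $E_\beta(-\mu t^\beta|\lambda|^\alpha)\le(1+\mu t^\beta|\lambda|^\alpha/\Gamma(1+\beta))^{-1}$ from~(\ref{mattig cond}) forces the contribution to vanish. It is exactly here that $\alpha>1$ is required: it guarantees both the integrability of the limiting amplitude $E_\beta^2(-\mu t^\beta|\lambda|^\alpha)\sim|\lambda|^{-2\alpha}$ and enough decay to suppress the mass accumulating near $\lambda=\pm w\,\varepsilon^{-\beta/\alpha}$. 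Granting this, the generalized dominated convergence theorem yields $R(t,x)\to0$; since $U_\varepsilon(t,x)$ lies in the first Wiener chaos, $L^2$-convergence of arbitrary linear combinations gives convergence of the finite-dimensional distributions.

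Finally, I would compute the covariance~(\ref{cov of Bessel}) directly from~(\ref{U_{0}}) using the isometry: since the limiting density $c_0$ is constant and the integrand is even,
\[
\Cov\big(U_0(t,x),U_0(t',x')\big)=c_0\int_{\mathbb R}e^{i\lambda(x-x')}E_\beta(-\mu t^\beta|\lambda|^\alpha)E_\beta(-\mu t'^\beta|\lambda|^\alpha)\,d\lambda,
\]
and replacing $e^{i\lambda(x-x')}$ by $\cos(\lambda(x-x'))$ and folding the integral onto $[0,\infty)$ produces~(\ref{cov of Bessel}), with the stated constant $2c(\kappa)(1-\theta(|w|))/|w|^{1-\kappa}$.
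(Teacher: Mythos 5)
Your proposal follows essentially the same route as the paper's proof: rescale, change variables $\lambda=\tilde\lambda\varepsilon^{\beta/\alpha}$, use Brownian scaling to cancel the normalisation, reduce to an $L^2$ estimate via the It\^o isometry, handle the Bessel factor through the monotonicity of $E_\beta(-\cdot)$, and verify the generalized dominated convergence theorem by splitting the frequency domain into a central region and an outer region containing the migrating singularities; the covariance computation at the end is also identical.

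One concrete point needs fixing: the admissible range $\delta\in(0,\beta/\alpha)$ for your splitting exponent is too generous. Your outer-region argument pulls the supremum of the Mittag--Leffler weight over $\{|u|>\varepsilon^{\beta/\alpha-\delta}\}$ out of the integral, which by (\ref{mattig cond}) contributes a factor of order $\varepsilon^{\delta\alpha}$ per power of $E_\beta$, and then bounds the remaining density integral by its total mass, which after undoing the substitution costs a factor $\varepsilon^{-\beta/\alpha}$. Unlike the Gaussian weight $e^{-2\mu t\lambda^2}$ in Theorem~\ref{the3}, which kills any negative power of $\varepsilon$ at the cut point, this polynomial decay beats $\varepsilon^{-\beta/\alpha}$ only if $\delta$ is bounded away from zero: with your squared envelope $E_\beta^2$ you need $\delta>\beta/(2\alpha^2)$, while the paper, whose envelope carries $E_\beta$ to the first power, takes $\delta\in(\beta/\alpha^2,\beta/\alpha)$. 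Since $\alpha>1$ these intervals are nonempty, so the repair is one line, but as written your estimate fails for $\delta$ close to $0$ and the claim that the outer contribution vanishes for every $\delta\in(0,\beta/\alpha)$ is not justified by the bound you invoke.
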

\begin{proof}
    It follows from (\ref{spectral 1}), that in the sense of the finite-dimensional distributions it holds
    \begin{equation*}
        U_{\varepsilon}(t,x)=\frac{1}{\varepsilon^{\beta/2\alpha}}\int_{\mathbb R}e^{i\lambda \frac{x}{\varepsilon^{\beta/\alpha}}}E_{\beta}\left(-\mu |\lambda|^{\alpha}(1+|\lambda|^{2})^{\frac{\gamma}{2}}\left(\frac{t}{\varepsilon} \right)^{\beta} \right)\sqrt{f(\lambda)}W(d\lambda).
    \end{equation*}
    By (\ref{spectral}), the change of variable $\lambda=\tilde{\lambda}\varepsilon^{\beta/\alpha}$, and using the Brownian scaling property, $W(d(\tilde{\lambda}\varepsilon^{\beta/\alpha}))\overset{d}{=}\varepsilon^{\beta/2\alpha}W(d\tilde{\lambda}),$ one obtains
    \begin{align}\label{U}
        U_{\varepsilon}(t,x)&\overset{d}{=}\frac{1}{\varepsilon^{\beta/2\alpha}}\int_{\mathbb R}e^{i\tilde{\lambda}x} E_{\beta}\left(-\mu |\tilde{\lambda}\varepsilon^{\beta/\alpha}|^{\alpha}(1+|\tilde{\lambda}\varepsilon^{\beta/\alpha}|^{2})^{\gamma/2}\left(\frac{t}{\varepsilon} \right)^{\beta}\right)\sqrt{f(\tilde{\lambda}\varepsilon^{\beta/\alpha})}W(d\tilde{\lambda}\varepsilon^{\beta/\alpha})\nonumber\\
        &\overset{d}{=}\int_{\mathbb R}e^{i\tilde{\lambda}x}E_{\beta}\left(-\mu |\tilde{\lambda}|^{\alpha}(1+|\tilde{\lambda}\varepsilon^{\beta/\alpha}|^{2})^{\gamma/2}t^{\beta}\right)\nonumber\\
        &\times \sqrt{\frac{c_{2}(\kappa)}{2}\left(\frac{1-\theta_\kappa(|\tilde{\lambda}\varepsilon^{\beta/\alpha}+w|)}{|\tilde{\lambda}\varepsilon^{\beta/\alpha}+w|^{1-\kappa}}+\frac{1-\theta_\kappa(|\tilde{\lambda}\varepsilon^{\beta/\alpha}-w|)}{|\tilde{\lambda}\varepsilon^{\beta/\alpha}-w|^{1-\kappa}}  \right)}W(d\tilde{\lambda}).
    \end{align}

Then, by   (\ref{U_{0}}) and (\ref{U})
    \begin{align*}
        R(t,x)&=\mathbb E(U_{\varepsilon}(t,x)-U_{0}(t,x))^{2}\\
        &=\frac{c_{2}(\kappa)}{w^{1-\kappa}}(1-\theta_\kappa(|w|))\int_{\mathbb R}\Biggl(E_{\beta}(-\mu t^{\beta}|\lambda|^{\alpha})\\
        &\times\Biggl(\sqrt{\frac{w^{1-\kappa}}{2(1-\theta_\kappa(|w|))}\left(\frac{1-\theta_\kappa(|\lambda\varepsilon^{\beta/\alpha}+w|)}{|\lambda\varepsilon^{\beta/\alpha}+w|^{1-\kappa}}+\frac{1-\theta_\kappa(|\lambda\varepsilon^{\beta/\alpha}-w|)}{|\lambda\varepsilon^{\beta/\alpha}-w|^{1-\kappa}}  \right)} \\
        &\times \frac{ E_{\beta}\left(-\mu|\lambda|^{\alpha}(1+|\lambda\varepsilon^{\beta/\alpha}|^{2})^{\gamma/2} t^{\beta}\right)}{ E_{\beta}(-\mu t^{\beta}|\lambda|^{\alpha})}-1\Biggl)\Biggl)^{2}d\lambda\\
        &=\frac{c_{2}(\kappa)}{w^{1-\kappa}}(1-\theta_\kappa(|w|))\int_{\mathbb R}\left( E_{\beta}(-\mu t^{\beta}|\lambda|^{\alpha})(\tilde{Q}_{\varepsilon^{\beta/\alpha}}(\lambda)-1)\right)^{2}d\lambda,
    \end{align*}
    where

    $\tilde{Q}_{\varepsilon^{\beta/\alpha}}(\lambda):=\frac{ E_{\beta}\left(-\mu|\lambda |^{\alpha}(1+|\lambda\varepsilon^{\beta/\alpha}|^{2})^{\gamma/2}t^{\beta}\right)}{ E_{\beta}(-\mu t^{\beta}|\lambda|^{\alpha})}\sqrt{\frac{w^{1-\kappa}}{2(1-\theta_\kappa(|w|))}\left(\frac{1-\theta_\kappa(|\lambda\varepsilon^{\beta/\alpha}+w|)}{|\lambda\varepsilon^{\beta/\alpha}+w|^{1-\kappa}}+\frac{1-\theta_\kappa(|\lambda\varepsilon^{\beta/\alpha}-w|)}{|\lambda\varepsilon^{\beta/\alpha}-w|^{1-\kappa}}  \right)}.$

Now, it follows from
$\lim_{\varepsilon\to 0} \tilde{Q}_{\varepsilon^{\beta/\alpha}}(\lambda)=1$ that $\tilde{Q}_{\varepsilon^{\beta/\alpha}}(\lambda)-1$ converges pointwise to zero. As $\left(\tilde{Q}_{\varepsilon^{\beta/\alpha}}(\lambda)-1\right)^{2} \leq \left(\tilde{Q}_{\varepsilon^{\beta/\alpha}}(\lambda)\right)^{2}+1,$  one can apply the generalized Lebesgue's dominated convergence theorem.

To justify its conditions, one needs to show that the following limit exists and finite
\begin{equation}\label{justify condition on Bessel}
    \lim_{\varepsilon\to 0}\int_{\mathbb R} E_{\beta}^{2}(-\mu t^{\beta}|\lambda|^{\alpha})\left((\tilde{Q}_{\varepsilon^{\beta/\alpha}}(\lambda))^{2}+1\right)d\lambda=\int_{\mathbb R} E_{\beta}^{2}(-\mu t^{\beta}|\lambda|^{\alpha})\lim_{\varepsilon\to 0}\left((\tilde{Q}_{\varepsilon^{\beta/\alpha}}(\lambda))^{2}+1\right)d\lambda.
\end{equation}

The boundedness of the last term in (\ref{justify condition on Bessel}) follows from the boundedness of the spectral density at zero
\begin{equation}\label{finite}
    \int_{\mathbb R} E_{\beta}^{2}(-\mu t^{\beta}|\lambda|^{\alpha})\lim_{\varepsilon\to 0}\left((\tilde{Q}_{\varepsilon^{\beta/\alpha}}(\lambda))^{2}+1\right)d\lambda=\left((\tilde{Q}_{0}(0))^{2}+1\right)\int_{\mathbb R} E_{\beta}^{2}(-\mu t^{\beta}|\lambda|^{\alpha})d\lambda<\infty.
\end{equation}
To show that the integral in (\ref{finite}) is finite, let us change the variables as $u=\mu t^{\beta}\lambda^{\alpha}.$ Then $\lambda=\left(u/(\mu t^{\beta})\right)^{\frac{1}{\alpha}},$  $d\lambda=u^{{1/\alpha}-1}/(\alpha(\mu t^{\beta})^{1/\alpha})du,$ and it follows from (\ref{mattig cond}) that for $A>0$ it holds:
\begin{align*}
    \int_{0}^{+\infty}E_{\beta}^{2}(-u)u^{\frac{1}{\alpha}-1}du&=\int_{0}^{A}E_{\beta}^{2}(-u)u^{\frac{1}{\alpha}-1}du+\int_{A}^{+\infty}E_{\beta}^{2}(-u)u^{\frac{1}{\alpha}-1}du\nonumber\\
    &\leq\int_{0}^{A}\frac{u^{{1/\alpha}-1}}{\left(1+\frac{u}{\Gamma(1+\beta)}\right)^{2}}du+\int_{A}^{+\infty}\frac{u^{{1/\alpha}-1}}{\left(1+\frac{u}{\Gamma(1+\beta)}\right)^{2}}du\\
    &\leq\int_{0}^{A}\frac{du}{u^{1-{1/\alpha}}}+\int_{A}^{+\infty}\frac{\Gamma(1+\beta)}{u^{{3}-{1/\alpha}}}\,du.
\end{align*}
The integrals above are finite if $\alpha>1/2.$ It follows from the upper and lower bound in (\ref{mattig cond}) that the condition $\alpha>1/2$ is sufficient and necessary for the boundedness of the integral in (\ref{finite}).

Now, let us consider the integral on the left-hand side of (\ref{justify condition on Bessel})
\begin{align}\label{3I}
     \int_{\mathbb R}& E_{\beta}^{2}(-\mu t^{\beta}|\lambda|^{\alpha})\left((\tilde{Q}_{\varepsilon^{\beta/\alpha}}(\lambda))^{2}+1\right)d\lambda=\int_{\mathbb R} E_{\beta}^{2}(-\mu t^{\beta}|\lambda|^{\alpha})\nonumber\\
     &\times\Biggl(\Biggl(\frac{E_{\beta}\left(-\mu|\lambda|^{\alpha}(1+|\lambda\varepsilon^{\beta/\alpha}|^{2})^{\gamma/2}t^{\beta}\right)}{ E_{\beta}(-\mu t^{\beta}|\lambda|^{\alpha})} \nonumber\\ &\times \sqrt{\frac{w^{1-\kappa}}{2(1-\theta_\kappa(|w|))}\left(\frac{1-\theta_\kappa(|\lambda\varepsilon^{\beta/\alpha}+w|)}{|\lambda\varepsilon^{\beta/\alpha}+w|^{1-\kappa}}+\frac{1-\theta_\kappa(|\lambda\varepsilon^{\beta/\alpha}-w|)}{|\lambda\varepsilon^{\beta/\alpha}-w|^{1-\kappa}}  \right)}\Biggl)^{2}+1\Biggl)d\lambda\nonumber\\&=\frac{w^{1-\kappa}}{2(1-\theta_\kappa(|w|))}\Biggl(\int_{\mathbb R}\left( E_{\beta}(-\mu |\lambda|^{\alpha}(1+|\lambda\varepsilon^{\beta/\alpha}|^{2})^{\gamma/2}t^{\beta}) \right)^{2}\times\frac{1-\theta_\kappa(|\lambda\varepsilon^{\beta/\alpha}+w|)}{|\lambda\varepsilon^{\beta/\alpha}+w|^{1-\kappa}}d\lambda\nonumber\\&+\int_{\mathbb R}\left( E_{\beta}(-\mu |\lambda|^{\alpha}(1+|\lambda\varepsilon^{\beta/\alpha}|^{2})^{\gamma/2}t^{\beta} )\right)^{2}\times\frac{1-\theta_\kappa(|\lambda\varepsilon^{\beta/\alpha}-w|)}{|\lambda\varepsilon^{\beta/\alpha}-w|^{1-\kappa}}d\lambda \Biggl)\nonumber\\
     &+\int_{\mathbb R} E_{\beta}^{2}(-\mu t^{\beta}|\lambda|^{\alpha})d\lambda\nonumber\\&=:\frac{w^{1-\kappa}}{2(1-\theta_\kappa(|w|))}\left(I_{1}(\varepsilon^{\beta/\alpha})+I_{2}(\varepsilon^{\beta/\alpha}) \right)+\int_{\mathbb R}E_{\beta}^{2}(-\mu t^{\beta}|\lambda|^{\alpha})d\lambda.
\end{align}
Due to the symmetry, we will study only $I_{2}(\varepsilon^{\beta/\alpha}).$
Let us split the integration in $I_{2}(\varepsilon^{\beta/\alpha})$  into two regions, $|\lambda|\leq \varepsilon^{-\delta}$ and $|\lambda|> \varepsilon^{-\delta},$ $\delta \in (\beta/(2\alpha^2),\beta/\alpha),$
\begin{align}\label{first integral}
    I_{2}(\varepsilon^{\beta/\alpha})&=\int_{|\lambda|\leq \varepsilon^{-\delta}}\left( E_{\beta}(-\mu |\lambda|^{\alpha}(1+|\lambda\varepsilon^{\beta/\alpha}|^{2})^{\gamma/2}t^{\beta} )\right)^{2}\times\frac{1-\theta_\kappa(|\lambda\varepsilon^{\beta/\alpha}-w|)}{|\lambda\varepsilon^{\beta/\alpha}-w|^{1-\kappa}}d\lambda \nonumber\\& \hspace{-4mm}+ \int_{|\lambda|> \varepsilon^{-\delta}}\left( E_{\beta}(-\mu |\lambda|^{\alpha}(1+|\lambda\varepsilon^{\beta/\alpha}|^{2})^{\gamma/2}t^{\beta} )\right)^{2}\times\frac{1-\theta_\kappa(|\lambda\varepsilon^{\beta/\alpha}-w|)}{|\lambda\varepsilon^{\beta/\alpha}-w|^{1-\kappa}}d\lambda.
\end{align}

By the complete monotonicity property of the Mittag-Leffler functions of negative arguments, see \cite[Proposition 3.10]{gorenflo2020mittag}, we obtain the following lower and upper bounds for the first integral in (\ref{first integral})
\begin{align}
&\frac{1-\theta_\kappa(\omega)}{\omega^{1-\kappa}}
\int_{\mathbb R} \mathbbm{1}_{|\lambda|\leq \varepsilon^{-\delta}}\left( E_{\beta}(-\mu |\lambda|^{\alpha}(1+|\lambda\varepsilon^{\beta/\alpha}|^{2})^{\gamma/2}t^{\beta} )\right)^{2}d\lambda \label{lbound}\\
\le &\int_{|\lambda|\leq \varepsilon^{-\delta}}\left( E_{\beta}(-\mu |\lambda|^{\alpha}(1+|\lambda\varepsilon^{\beta/\alpha}|^{2})^{\gamma/2}t^{\beta} )\right)^{2}\times\frac{1-\theta_\kappa(|\lambda\varepsilon^{\beta/\alpha}-w|)}{|\lambda\varepsilon^{\beta/\alpha}-w|^{1-\kappa}}d\lambda \nonumber\\
& \le \int_{|\lambda|\leq \varepsilon^{-\delta}}\left( E_{\beta}(-\mu |\lambda|^{\alpha}t^{\beta} )\right)^{2}\times\frac{1-\theta_\kappa(|\lambda\varepsilon^{\beta/\alpha}-w|)}{|\lambda\varepsilon^{\beta/\alpha}-w|^{1-\kappa}}d\lambda \nonumber\\
& \le
\frac{1-\theta_\kappa(|\varepsilon^{\beta/\alpha-\delta}-\omega|)}{|\varepsilon^{\beta/\alpha-\delta}-\omega|^{1-\kappa}}\int_{|\lambda|\leq \varepsilon^{-\delta}} E_{\beta}^{2}(-\mu |\lambda|^{\alpha}t^{\beta} )d\lambda\nonumber,
\end{align}
where $\mathbbm{1}_{A}(\cdot)$ is the indicator function of a set $A.$

Then,
\begin{equation}\label{1I}
    \lim_{\varepsilon\to 0}\frac{1-\theta_\kappa(|\varepsilon^{\beta/\alpha-\delta}-\omega|)}{|\varepsilon^{\beta/\alpha-\delta}-\omega|^{1-\kappa}}\int_{|\lambda|\leq \varepsilon^{-\delta}} E_{\beta}^{2}(-\mu |\lambda|^{\alpha}t^{\beta} )d\lambda=\frac{1-\theta_\kappa(|w|)}{|w|^{1-\kappa}}\int_{\mathbb R} E_{\beta}^{2}(-\mu |\lambda|^{\alpha}t^{\beta} )d\lambda,
\end{equation}
where, as was proved for (\ref{finite}),  the last integral is finite.

From that, the integrals in (\ref{lbound}) are uniformly bounded. Noting that the integrands in (\ref{lbound}) are bounded by $E_{\beta}^{2}(-\mu |\lambda|^{\alpha}t^{\beta} )$ and pointwise converge to this bound, by the dominated convergence theorem,  one obtains that the lower bound converges to the same values as the upper one.
Therefore, the first integral in (\ref{first integral}) converges to $(1-\theta_\kappa(|w|))|w|^{\kappa-1}\int_{\mathbb R} E_{\beta}^{2}(-\mu |\lambda|^{\alpha}t^{\beta} )d\lambda.$

The second integral in (\ref{first integral}) can be estimated as
\begin{align}\label{second integral}
    0&<\int_{|\lambda|> \varepsilon^{-\delta}}\left( E_{\beta}(-\mu |\lambda|^{\alpha}(1+|\lambda\varepsilon^{\beta/\alpha}|^{2})^{\gamma/2}t^{\beta} )\right)^{2}\times\frac{1-\theta_\kappa(|\lambda\varepsilon^{\beta/\alpha}-w|)}{|\lambda\varepsilon^{\beta/\alpha}-w|^{1-\kappa}}d\lambda\nonumber\\&\leq \frac{ E_{\beta}^{2}(-\mu \varepsilon^{-\delta\alpha}(1+|\varepsilon^{{\beta/\alpha}-\delta}|^{2})^{\gamma/2}{t}^{\beta}) }{ \varepsilon^{\beta/\alpha}}\int_{|u|>\varepsilon^{{\beta/\alpha}-\delta}}\frac{1-\theta_\kappa(|u-w|)}{|u-w|^{1-\kappa}}du\nonumber\\&\leq \frac{E_{\beta}^{2}(-\mu {t}^{\beta}\varepsilon^{-\delta\alpha}) }{ \varepsilon^{\beta/\alpha}}\int_{\mathbb R}\frac{1-\theta_\kappa(|u-w|)}{|u-w|^{1-\kappa}}du\to 0, \quad \mbox{when}\ \varepsilon\to 0,
    \end{align}
     as by the upper bound in (\ref{mattig cond}) and the condition $\delta \in (\beta/(2\alpha^2),\beta/\alpha)$
 \[ \frac{E_{\beta}^{2}(-\mu {t}^{\beta}\varepsilon^{-\delta\alpha}) }{ \varepsilon^{\beta/\alpha}}\le \frac{1}{ \varepsilon^{\beta/\alpha}+\mu^{2} {t}^{2\beta}\varepsilon^{\beta/\alpha-2\delta\alpha}/\Gamma^{2}(1+\beta) }\to 0, \quad \mbox{when}\ \varepsilon\to 0,  \]
and due to the integrability of the spectral density, the integral in (\ref{second integral}) is finite.

 Hence, the equality in (\rm\ref{justify condition on Bessel}) follows from the results (\rm\ref{3I}), (\rm\ref{first integral}),  (\rm\ref{1I}) and (\ref{second integral}). Thus, by the generalized dominated converges theorem $\lim_{\varepsilon \to 0}E{\vert U_{\varepsilon}(t,x)-U_{0}(t,x)\vert}^{2}=0,$ which implies the convergence of finite dimensional distributions.

 Finally, as the random field $U_{0}(t,x)$ is zero mean, then
 \begin{align}
     &cov(U_{0}(t,x),U_{0}(t',x'))=\mathbb E U_{0}(t,x)U_{\varepsilon}(t^{\prime},x^{\prime})\nonumber\\&=\mathbb E\left(\frac{c_{2}(\kappa)}{|w|^{1-\kappa}}(1-\theta_\kappa(|w|))\int_{\mathbb R^2}  e^{i\lambda_{1}x}E_{\beta}\left(-\mu t^{\beta}|\lambda_{1}|^{\alpha}\right) e^{i\lambda_{2}x^{\prime}} E_{\beta}\left(-\mu t^{\prime\beta}|\lambda_{2}|^{\alpha}\right)W(d\lambda_{1})W(d\lambda_{2})\right)\nonumber\\&=\frac{c_{2}(\kappa)}{|w|^{1-\kappa}}(1-\theta_\kappa(|w|))\int_{\mathbb {R}}e^{i\lambda(x-x^{\prime})}E_{\beta}\left(-\mu t^{\beta}|\lambda|^{\alpha}\right)E_{\beta}\left(-\mu t^{\prime\beta}|\lambda|^{\alpha}\right)d\lambda.\nonumber\\
     &=\frac{2c_{2}(\kappa)(1-\theta_\kappa(|w|))}{|w|^{1-\kappa}}\int_{0}^{
  +\infty}\cos\lambda(x-x')E_{\beta}(-\mu t^{\beta}|\lambda|^{\alpha})E_{\beta}(-\mu t'^{\beta}|\lambda|^{\alpha})d\lambda\nonumber,
 \end{align}
 which completes the proof.
\end{proof}

\begin{example} This example demonstrates how the covariance function of the limit field $U_{0}(t,x)$ given in Theorem~{\rm\ref{the4}}, depends on time, space, and its parameters. Without loss of generality, we assume that in (\ref{cov of Bessel}) the constant $\frac{2c_{2}(\kappa)(1-\theta_\kappa(|w|))}{|w|^{1-\kappa}}=1$. Since the covariance $cov(U_{0}(t,x),U_{0}(t',x'))$ given by (\ref{cov of Bessel}) depends on four variables, $t$, $t'$, $x$, $x'$, along with its parameters, such as $\alpha$, $\beta$,  and $\mu$, we can provide only 3D plots that illustrate the behaviour with respect to a subset of these variables keeping the remaining variables fixed. Given that the field $U_{0}(t,x)$ is stationary in space, we will use the notation $\delta=x-x'$.

Consider the case of $\beta=1/2$, for which ${\displaystyle E_{\frac {1}{2}}(z)=\exp(z^{2})\operatorname {erfc}(-z)}$, where the Gauss error function, denoted by $\operatorname{erf}$, is defined as ${\displaystyle \operatorname {erf} (z)={\frac {2}{\sqrt {\pi }}}\int_{0}^{z}e^{-t^{2}}\mathrm {d}t.}$

For $\mu = t = t' = 1$, the left subplot of Figure~{\rm\ref{fig5}} presents the covariance function $cov(U_{0}(1,x),U_{0}(1,x+\delta))$ for $\delta$ in the range $[0,5]$ and $\alpha$ in the range $[1,2]$. The plot demonstrates that the covariance decreases as the distance $\delta$ between spatial locations increases. Regarding $\alpha$, the covariance takes large values at larger values of $\alpha$ if $\delta$ is close to $0$. For large values of $\delta$, and vice versa, the covariance is decreasing with increasing $\alpha$. Also, the first plot demonstrates that the rate of decrease of the covariance function with respect to $\delta$ is large for large values of $\alpha$.
\begin{figure}[htb]%
    \centering
    \subfloat{{\includegraphics[scale=0.3]{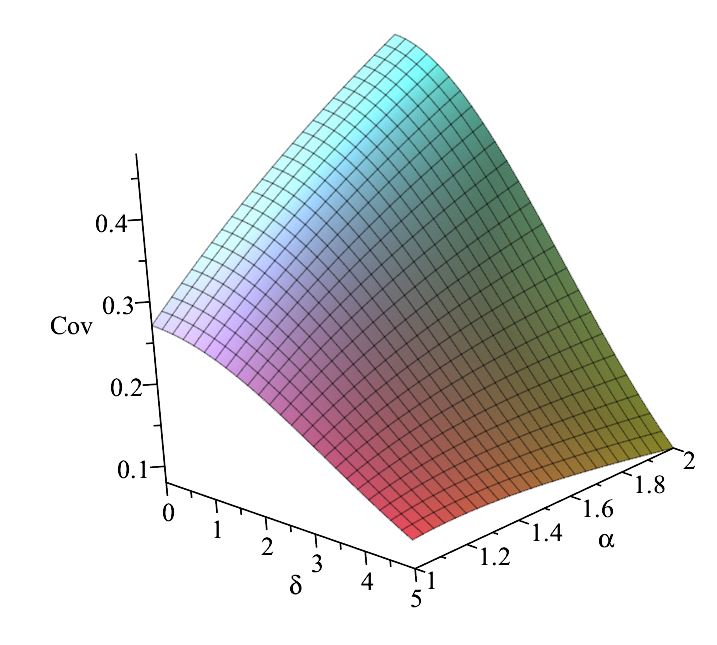} }}%
    \hspace{10mm}
    \subfloat{{\includegraphics[scale=0.4]{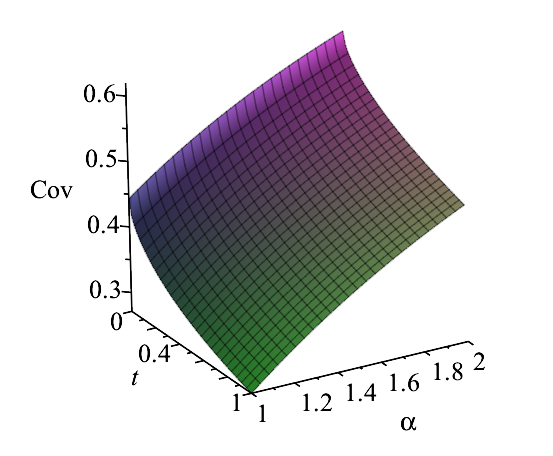} }}%
    \vspace{5mm}
    \caption{Dependence of the covariance function of $U_{0}(t,x)$ on  $\delta,$ $t-t',$  and $\alpha.$}
    \label{fig5}
\end{figure}

For $\delta=0$, i.e., when $x = x'$, the right subplot of Figure~{\rm\ref{fig5}} shows the covariance function $cov(U_{0}(1,x),U_{0}(t,x))$ for the parameter $\alpha$ in the range $[1,2]$, with the fixed value of $t' = 1$ and $t$ in the range $[1,2]$. As expected, the plot demonstrates that the covariance decreases as the time difference $t - t'=t-1$ increases. The covariance is an increasing function of $\alpha.$ However, unlike in the previous subplot, the covariance always increases with $\alpha$. In general, one can conclude that for small time or space distances $\alpha$ plays an important role and in such cases the covariance of $U_{0}(x,t)$ increases with $\alpha$. When time and space distances increase the covariance approaches to $0$, and therefore, the impact of $\alpha$ diminishes.
\end{example}

\section{Conclusion}\label{sec6}
The paper proved multiscaling limit theorems for renormalized solutions of stochastic partial differential equations. Specifically, it examined the cases where initial conditions are subordinated to random processes with cyclic long-range dependence. The spectral and covariance representations for the corresponding limit random fields were derived. Numerical examples were provided to illustrate the obtained theoretical results.

Beyond the framework considered in the cited papers, there are many interesting open problems about properties of the obtained solutions. Investigating the density of the solutions, certain geometric functionals (such as level sets), extensions to the vector case, and the corresponding functionals, as discussed in \cite{Olivera, Omari, Armentano}, are particularly interesting and challenging problems. Studying regularity properties of the asympototic field is an open problem, especially interesting in temporal settings, see, for example, \cite{guo2024samplepathpropertiessmall} and \cite{qian2025temporalregularitystochasticheat}.

Another important future direction is to consider the discretized version and related problems (see Section 4.1 in \cite{LW}). In these cases, the spectral functions involve elliptic Jacobi theta functions, requiring an extension of the proposed methodology to accommodate them.

We complete the paper by discussing new challenges for subordinated cases with Hermite ranks greater than 1. In these cases, the analogous of the known results and the approach developed in the paper are not valid and a new methodology is required.

Specifically, let us consider the initial value problem (\ref{classical heat equation}) and (\ref{initial condition for heat equation}), but with the random initial condition $    u(0,x)=H_{m}(\xi(x)),$ $x\in \mathbb R,$
where $H_{m}(\cdot),$  $m>1,$ is the $m${th} Hermite polynomial.
Note, that in this case $h(x)\equiv H_{m}(x)$ and $\eta(x)=H_{m}(\xi(x))$ in Conditions~A and~C and the Hermite rank is $m>1$ in Condition~B.

Then, $\eta(x)$ can be represented as
\[\eta(x)=\int_{\mathbb R^m}' e^{i(\lambda_{1}+...+\lambda_{m})x}\left(\prod_{j=1}^{m}{f^{1/2}(\lambda_{j})}\right)W(d\lambda_{1})...W(d\lambda_{m}).\]

When $\varepsilon \to 0,$ then, similar to Theorems~\ref{th1}-~\ref{the4},  one can expect that for some constant $\zeta$ there exists a limit of the random fields
$        U_{m,\varepsilon}(t,x)=\varepsilon^{\zeta}\,u_m\left(\frac{t}{\varepsilon}, \frac{x}{\sqrt{\varepsilon}}\right),$
and,  if it is possible to use the dominated convergence theorem or its analogous, then that limit has the form
  \begin{equation}\label{newlim}
   C\int_{\mathbb R^m}e^{i(\lambda_{1}+...+\lambda_{m})x  -\mu t|\lambda_{1}+...+\lambda_{m} | ^{2}}W(d\lambda_{1})...W(d\lambda_{m}),
   \end{equation}
   or
   \[
   C\int_{\mathbb R^m}e^{i(\lambda_{1}+...+\lambda_{m})x} E_{\beta}\left(-\mu t^{\beta}|\lambda_{1}+...+\lambda_{m} |^{\alpha}\right)W(d\lambda_{1})...W(d\lambda_{m}).
\]
However, the above integrals are not correctly defined as, for example, the variance of the first field~(\ref{newlim}),
  \begin{equation}\label{newlim1}
   C\int_{\mathbb R^m}e^{-2\mu t|\lambda_{1}+...+\lambda_{m} | ^{2}}d\lambda_{1}...d\lambda_{m},
   \end{equation}
is infinite. It is easy to see because the integrand takes values close to 1 in the region of infinite volume around hyper-diagonals.

This phenomenon does not occur in the classical long memory case addressed in Theorems~\ref{th1} and \ref{th2}, where the integrands are divided by powers of the norms of individual $\lambda_i,$  $i=1,\ldots,m,$ which makes them integrable. Thus, an interesting question arises as to whether it is possible to derive limit theorems for $m>1,$ and if so, what the appropriate normalizations and limits would be if they exist.

\section*{Acknowledgements} This research was supported under the Australian Research Council's Discovery Projects funding scheme (project number  DP220101680). A.~Olenko would like to thank the Department of Statistics and Probability, Michigan State University, USA, for hosting his sabbatical in 2024 and particularly to Prof.~Y.~Xiao for his support and for providing stimulating discussions, which contributed to refining certain aspects of this paper. He is also grateful to Prof.~E.~Orsingher (Sapienza Università di Roma) for discussions on the general theory of Airy processes. A.~Olenko was partially supported by La Trobe University's SCEMS CaRE and Beyond grant. N. Leonenko would like to thank for support and hospitality during the programmes “Fractional Differential Equations” (2022), “Uncertainly Quantification and Modelling of Materials”(2024) and "Stochastic Systems for Anomalous Diffusion"(2024)  in Isaac Newton Institute for Mathematical Sciences, Cambridge. Also, he was partially supported under the  LMS grant 42997 (UK), grant FAPESP 22/09201-8 (Brazil) and the Croatian Science Foundation (HRZZ) grant Scaling in Stochastic Models (IP-2022-10-8081). The authors would like to thank Prof.~P.~Broadbridge for discussions about various applications of PDEs in physics and cosmology studies.


\bibliographystyle{imsart-nameyear} 
\bibliography{bibliography}       

\begin{thebibliography}{54}

\bibitem[\protect\citeauthoryear{Adam et~al.}{2016}]{Adam}
\begin{barticle}[author]
\bauthor{\bsnm{Adam},~\bfnm{R.}\binits{R.}} \betal{et~al.}
(\byear{2016}).
\btitle{{Planck 2015 results. I. Overview of products and scientific results}}.
\bjournal{Astronomy and Astrophysics}
\bvolume{594}
\bpages{A16}.
\end{barticle}
\endbibitem

\bibitem[\protect\citeauthoryear{Almeida}{2017}]{almeida2017caputo}
\begin{barticle}[author]
\bauthor{\bsnm{Almeida},~\bfnm{R.}\binits{R.}}
(\byear{2017}).
\btitle{A {Caputo} fractional derivative of a function with respect to another
  function}.
\bjournal{Communications in Nonlinear Science and Numerical Simulation}
\bvolume{44}
\bpages{460 -- 481}.
\end{barticle}
\endbibitem

\bibitem[\protect\citeauthoryear{Anh and Leonenko}{1999}]{Anh1999NonGaussianSF}
\begin{barticle}[author]
\bauthor{\bsnm{Anh},~\bfnm{V.}\binits{V.}} \AND
  \bauthor{\bsnm{Leonenko},~\bfnm{N.}\binits{N.}}
(\byear{1999}).
\btitle{Non-{Gaussian} scenarios for the heat equation with singular initial
  conditions}.
\bjournal{Stochastic Processes and Their Applications}
\bvolume{84}
\bpages{91 -- 114}.
\end{barticle}
\endbibitem

\bibitem[\protect\citeauthoryear{Anh and Leonenko}{2000}]{ANH2000239}
\begin{barticle}[author]
\bauthor{\bsnm{Anh},~\bfnm{V.}\binits{V.}} \AND
  \bauthor{\bsnm{Leonenko},~\bfnm{N.}\binits{N.}}
(\byear{2000}).
\btitle{Scaling laws for fractional diffusion-wave equations with singular
  data}.
\bjournal{Statistics \& Probability Letters}
\bvolume{48}
\bpages{239 -- 252}.
\end{barticle}
\endbibitem

\bibitem[\protect\citeauthoryear{Anh and Leonenko}{2001}]{anh2001spectral}
\begin{barticle}[author]
\bauthor{\bsnm{Anh},~\bfnm{V.}\binits{V.}} \AND
  \bauthor{\bsnm{Leonenko},~\bfnm{N.}\binits{N.}}
(\byear{2001}).
\btitle{Spectral analysis of fractional kinetic equations with random data}.
\bjournal{Journal of Statistical Physics}
\bvolume{104}
\bpages{1349 -- 1387}.
\end{barticle}
\endbibitem

\bibitem[\protect\citeauthoryear{Anh and
  Leonenko}{2002}]{Anh2002RenormalizationAH}
\begin{barticle}[author]
\bauthor{\bsnm{Anh},~\bfnm{V.}\binits{V.}} \AND
  \bauthor{\bsnm{Leonenko},~\bfnm{N.}\binits{N.}}
(\byear{2002}).
\btitle{Renormalization and homogenization of fractional diffusion equations
  with random data}.
\bjournal{Probability Theory and Related Fields}
\bvolume{124}
\bpages{381 -- 408}.
\end{barticle}
\endbibitem

\bibitem[\protect\citeauthoryear{Anh and Leonenko}{2003}]{ANH200377}
\begin{barticle}[author]
\bauthor{\bsnm{Anh},~\bfnm{V.}\binits{V.}} \AND
  \bauthor{\bsnm{Leonenko},~\bfnm{N.}\binits{N.}}
(\byear{2003}).
\btitle{Harmonic analysis of random fractional diffusion–wave equations}.
\bjournal{Applied Mathematics and Computation}
\bvolume{141}
\bpages{77 -- 85}.
\end{barticle}
\endbibitem

\bibitem[\protect\citeauthoryear{Armentano et~al.}{2023}]{Armentano}
\begin{barticle}[author]
\bauthor{\bsnm{Armentano},~\bfnm{D.}\binits{D.}},
  \bauthor{\bsnm{Azaïs},~\bfnm{J.~M.}\binits{J.~M.}},
  \bauthor{\bsnm{Dalmao},~\bfnm{F.}\binits{F.}},
  \bauthor{\bsnm{León},~\bfnm{J.~R.}\binits{J.~R.}} \AND
  \bauthor{\bsnm{Mordecki},~\bfnm{E.}\binits{E.}}
(\byear{2023}).
\btitle{{On the finiteness of the moments of the measure of level sets of
  random fields}}.
\bjournal{Brazilian Journal of Probability and Statistics}
\bvolume{37}
\bpages{219 -- 245}.
\end{barticle}
\endbibitem

\bibitem[\protect\citeauthoryear{B{\'e}cus}{1980}]{becus1980variational}
\begin{bincollection}[author]
\bauthor{\bsnm{B{\'e}cus},~\bfnm{G.}\binits{G.}}
(\byear{1980}).
\btitle{Variational formulation of some problems for the random heat equation}.
In \bbooktitle{Applied Stochastic Processes}
\bpages{19 -- 36}.
\bpublisher{Academic Press}.
\end{bincollection}
\endbibitem

\bibitem[\protect\citeauthoryear{Beghin et~al.}{2000}]{beghin2000gaussian}
\begin{barticle}[author]
\bauthor{\bsnm{Beghin},~\bfnm{L.}\binits{L.}},
  \bauthor{\bsnm{Knopova},~\bfnm{V.}\binits{V.}},
  \bauthor{\bsnm{Leonenko},~\bfnm{N.}\binits{N.}} \AND
  \bauthor{\bsnm{Orsingher},~\bfnm{E.}\binits{E.}}
(\byear{2000}).
\btitle{Gaussian limiting behavior of the rescaled solution to the linear
  {Korteweg--de Vries} equation with random initial conditions}.
\bjournal{Journal of Statistical Physics}
\bvolume{99}
\bpages{769 -- 781}.
\end{barticle}
\endbibitem

\bibitem[\protect\citeauthoryear{Bingham, Goldie and
  Teugels}{1987}]{Bingham_Goldie_Teugels}
\begin{bbook}[author]
\bauthor{\bsnm{Bingham},~\bfnm{N.}\binits{N.}},
  \bauthor{\bsnm{Goldie},~\bfnm{C.}\binits{C.}} \AND
  \bauthor{\bsnm{Teugels},~\bfnm{J.}\binits{J.}}
(\byear{1987}).
\btitle{Regular Variation}.
\bpublisher{Cambridge University Press}.
\end{bbook}
\endbibitem

\bibitem[\protect\citeauthoryear{Broadbridge et~al.}{2020}]{BKLO1}
\begin{barticle}[author]
\bauthor{\bsnm{Broadbridge},~\bfnm{P.}\binits{P.}},
  \bauthor{\bsnm{Kolesnik},~\bfnm{A.}\binits{A.}},
  \bauthor{\bsnm{Leonenko},~\bfnm{N.}\binits{N.}} \AND
  \bauthor{\bsnm{Olenko},~\bfnm{A.}\binits{A.}}
(\byear{2020}).
\btitle{Spherically restricted random hyperbolic diffusion}.
\bjournal{Entropy}
\bvolume{22}
\bpages{217}.
\end{barticle}
\endbibitem

\bibitem[\protect\citeauthoryear{Broadbrige et~al.}{2019}]{BKLO}
\begin{barticle}[author]
\bauthor{\bsnm{Broadbrige},~\bfnm{P.}\binits{P.}},
  \bauthor{\bsnm{Kolesnik},~\bfnm{A.}\binits{A.}},
  \bauthor{\bsnm{Leonenko},~\bfnm{N.}\binits{N.}} \AND
  \bauthor{\bsnm{Olenko},~\bfnm{A.}\binits{A.}}
(\byear{2019}).
\btitle{Random spherical hyperbolic diffusion}.
\bjournal{Journal of Statistical Physics}
\bvolume{177}
\bpages{889 -- 916}.
\end{barticle}
\endbibitem

\bibitem[\protect\citeauthoryear{Cabella and Marinucci}{2009}]{Cabella}
\begin{barticle}[author]
\bauthor{\bsnm{Cabella},~\bfnm{P.}\binits{P.}} \AND
  \bauthor{\bsnm{Marinucci},~\bfnm{D.}\binits{D.}}
(\byear{2009}).
\btitle{Statistical challenges in the analysis of cosmic microwave background
  radiation}.
\bjournal{The Annals of Applied Statistics}
\bvolume{3}
\bpages{61 -- 95}.
\end{barticle}
\endbibitem

\bibitem[\protect\citeauthoryear{Christakos}{1992}]{CG}
\begin{bbook}[author]
\bauthor{\bsnm{Christakos},~\bfnm{G.}\binits{G.}}
(\byear{1992}).
\btitle{Random Field Models in Earth Sciences}.
\bpublisher{Academic Press}, \baddress{San Diego}.
\end{bbook}
\endbibitem

\bibitem[\protect\citeauthoryear{Christakos}{2017}]{CG1}
\begin{bbook}[author]
\bauthor{\bsnm{Christakos},~\bfnm{G.}\binits{G.}}
(\byear{2017}).
\btitle{Spatiotemporal Random Fields: Theory and Applications}.
\bpublisher{Elsevier}, \baddress{Amsterdam}.
\end{bbook}
\endbibitem

\bibitem[\protect\citeauthoryear{De~F{\'e}riet}{1956}]{de1956random}
\begin{bincollection}[author]
\bauthor{\bsnm{De~F{\'e}riet},~\bfnm{J~Kamp{\'e}}\binits{J.~K.}}
(\byear{1956}).
\btitle{Random Solutions of Partial Differential Equations}.
In \bbooktitle{Proc. 3rd Berkeley Symposium on Mathematical Statistics and
  Probability},
\bvolume{3}
\bpages{199 -- 208}.
\bpublisher{University of California Press, Berkeley,California}.
\end{bincollection}
\endbibitem

\bibitem[\protect\citeauthoryear{Dobrushin and Major}{1979}]{dobrushin1979non}
\begin{barticle}[author]
\bauthor{\bsnm{Dobrushin},~\bfnm{R.}\binits{R.}} \AND
  \bauthor{\bsnm{Major},~\bfnm{P.}\binits{P.}}
(\byear{1979}).
\btitle{Non-central limit theorems for non-linear functional of {Gaussian}
  fields}.
\bjournal{Zeitschrift f{\"u}r Wahrscheinlichkeitstheorie und Verwandte Gebiete}
\bvolume{50}
\bpages{27 -- 52}.
\end{barticle}
\endbibitem

\bibitem[\protect\citeauthoryear{D’Ovidio}{2021}]{d2021non}
\begin{barticle}[author]
\bauthor{\bsnm{D’Ovidio},~\bfnm{M.}\binits{M.}}
(\byear{2021}).
\btitle{Non-local logistic equations from the probability viewpoint}.
\bjournal{Theory of Probability and Mathematical Statistics}
\bvolume{104}
\bpages{77 -- 87}.
\end{barticle}
\endbibitem

\bibitem[\protect\citeauthoryear{{F. W. J. Olver, L. C. Maximon}}{}]{NIST:DLMF}
\begin{bmisc}[author]
\bauthor{\bsnm{{F. W. J. Olver, L. C. Maximon}}}
\btitle{{\it NIST Digital Library of Mathematical Functions}}.
\bhowpublished{\url{https://dlmf.nist.gov/}, Release 1.2.0 of 2024-03-15}.
\bnote{F.~W.~J. Olver, A.~B. {Olde Daalhuis}, D.~W. Lozier, B.~I. Schneider,
  R.~F. Boisvert, C.~W. Clark, B.~R. Miller, B.~V. Saunders, H.~S. Cohl, and
  M.~A. McClain, eds.}
\end{bmisc}
\endbibitem

\bibitem[\protect\citeauthoryear{Fisher, Lewis and Embleton}{1993}]{Fisher}
\begin{bbook}[author]
\bauthor{\bsnm{Fisher},~\bfnm{N.~I.}\binits{N.~I.}},
  \bauthor{\bsnm{Lewis},~\bfnm{T.}\binits{T.}} \AND
  \bauthor{\bsnm{Embleton},~\bfnm{B.~J.~J.}\binits{B.~J.~J.}}
(\byear{1993}).
\btitle{Statistical Analysis of Spherical Data}.
\bpublisher{Cambridge University Press}, \baddress{Cambridge}.
\end{bbook}
\endbibitem

\bibitem[\protect\citeauthoryear{Folland}{1999}]{Folland1999}
\begin{bbook}[author]
\bauthor{\bsnm{Folland},~\bfnm{G.}\binits{G.}}
(\byear{1999}).
\btitle{Real Analysis: Modern Techniques and Their Applications}.
\bpublisher{Wiley}, \baddress{New York}.
\end{bbook}
\endbibitem

\bibitem[\protect\citeauthoryear{Gorenflo et~al.}{2020}]{gorenflo2020mittag}
\begin{bbook}[author]
\bauthor{\bsnm{Gorenflo},~\bfnm{R.}\binits{R.}},
  \bauthor{\bsnm{Kilbas},~\bfnm{A.~A.}\binits{A.~A.}},
  \bauthor{\bsnm{Mainardi},~\bfnm{F.}\binits{F.}},
  \bauthor{\bsnm{Rogosin},~\bfnm{S.~V.}\binits{S.~V.}} \betal{et~al.}
(\byear{2020}).
\btitle{Mittag-Leffler Functions, Related Topics and Applications}.
\bpublisher{Springer}, \baddress{Berlin}.
\end{bbook}
\endbibitem

\bibitem[\protect\citeauthoryear{Guo
  et~al.}{2024}]{guo2024samplepathpropertiessmall}
\begin{barticle}[author]
\bauthor{\bsnm{Guo},~\bfnm{Y.}\binits{Y.}},
  \bauthor{\bsnm{Song},~\bfnm{J.}\binits{J.}},
  \bauthor{\bsnm{Wang},~\bfnm{R.}\binits{R.}} \AND
  \bauthor{\bsnm{Xiao},~\bfnm{Y.}\binits{Y.}}
(\byear{2024}).
\btitle{Sample path properties and small ball probabilities for stochastic
  fractional diffusion equations}.
\bjournal{arXiv}
\bvolume{2411.12192}.
\end{barticle}
\endbibitem

\bibitem[\protect\citeauthoryear{Henner, Belozerova and
  Forinash}{2009}]{henner2009mathematical}
\begin{bbook}[author]
\bauthor{\bsnm{Henner},~\bfnm{V.}\binits{V.}},
  \bauthor{\bsnm{Belozerova},~\bfnm{T.}\binits{T.}} \AND
  \bauthor{\bsnm{Forinash},~\bfnm{K.}\binits{K.}}
(\byear{2009}).
\btitle{Mathematical Methods in Physics: Partial Differential Equations,
  Fourier Series, and Special Functions}.
\bpublisher{CRC Press}, \baddress{Boca Raton}.
\end{bbook}
\endbibitem

\bibitem[\protect\citeauthoryear{Ivanov and
  Leonenko}{1989}]{Statisticalanalysis}
\begin{bbook}[author]
\bauthor{\bsnm{Ivanov},~\bfnm{A.}\binits{A.}} \AND
  \bauthor{\bsnm{Leonenko},~\bfnm{N.}\binits{N.}}
(\byear{1989}).
\btitle{Statistical Analysis of Random Fields}.
\bpublisher{Kluwer Academic Publishers}, \baddress{Dordrecht}.
\end{bbook}
\endbibitem

\bibitem[\protect\citeauthoryear{Ivanov et~al.}{2013}]{Ivanov2013}
\begin{barticle}[author]
\bauthor{\bsnm{Ivanov},~\bfnm{A.~V.}\binits{A.~V.}},
  \bauthor{\bsnm{Leonenko},~\bfnm{N.}\binits{N.}},
  \bauthor{\bsnm{Ruiz-Medina},~\bfnm{M.~D.}\binits{M.~D.}} \AND
  \bauthor{\bsnm{Savich},~\bfnm{I.~N.}\binits{I.~N.}}
(\byear{2013}).
\btitle{Limit theorems for weighted nonlinear transformations of {G}aussian
  stationary processes with singular spectra}.
\bjournal{The Annals of Probability}
\bvolume{41}
\bpages{1088 -- 1114}.
\end{barticle}
\endbibitem

\bibitem[\protect\citeauthoryear{Leonenko}{1999}]{Leonenko1999LimitTF}
\begin{bbook}[author]
\bauthor{\bsnm{Leonenko},~\bfnm{N.}\binits{N.}}
(\byear{1999}).
\btitle{Limit Theorems for Random Fields with Singular Spectrum}.
\bpublisher{Kluwer Academic}, \baddress{Dordrecht}.
\end{bbook}
\endbibitem

\bibitem[\protect\citeauthoryear{Leonenko, Nanayakkara and Olenko}{2021}]{LNO}
\begin{barticle}[author]
\bauthor{\bsnm{Leonenko},~\bfnm{N.}\binits{N.}},
  \bauthor{\bsnm{Nanayakkara},~\bfnm{R.}\binits{R.}} \AND
  \bauthor{\bsnm{Olenko},~\bfnm{A.}\binits{A.}}
(\byear{2021}).
\btitle{Analysis of spherical monofractal and multifractal random fields}.
\bjournal{Stochastic Environmental Research and Risk Assessment}
\bvolume{35}
\bpages{681 -- 701}.
\end{barticle}
\endbibitem

\bibitem[\protect\citeauthoryear{Leonenko and
  Olenko}{1991}]{Leonenko1991TauberianAA}
\begin{barticle}[author]
\bauthor{\bsnm{Leonenko},~\bfnm{N.}\binits{N.}} \AND
  \bauthor{\bsnm{Olenko},~\bfnm{A.}\binits{A.}}
(\byear{1991}).
\btitle{Tauberian and {Abelian} theorems for correlation function of a
  homogeneous isotropic random field}.
\bjournal{Ukrainian Mathematical Journal}
\bvolume{43}
\bpages{1539 -- 1548}.
\end{barticle}
\endbibitem

\bibitem[\protect\citeauthoryear{Leonenko and
  Olenko}{2013}]{Leonenko_Olenko_2013}
\begin{barticle}[author]
\bauthor{\bsnm{Leonenko},~\bfnm{N.}\binits{N.}} \AND
  \bauthor{\bsnm{Olenko},~\bfnm{A.}\binits{A.}}
(\byear{2013}).
\btitle{{Tauberian and {Abelian} theorems for long-range dependent random
  fields}}.
\bjournal{Methodology and Computing in Applied Probability}
\bvolume{15}
\bpages{715 -- 742}.
\end{barticle}
\endbibitem

\bibitem[\protect\citeauthoryear{Leonenko and
  Woyczynski}{1998a}]{Leonenko1998ScalingLO}
\begin{barticle}[author]
\bauthor{\bsnm{Leonenko},~\bfnm{N.}\binits{N.}} \AND
  \bauthor{\bsnm{Woyczynski},~\bfnm{W.}\binits{W.}}
(\byear{1998}a).
\btitle{Scaling limits of solutions of the heat equation for singular
  non-{Gaussian} data}.
\bjournal{Journal of Statistical Physics}
\bvolume{91}
\bpages{423 -- 438}.
\end{barticle}
\endbibitem

\bibitem[\protect\citeauthoryear{Leonenko and
  Woyczynski}{1998b}]{Leonenko1998ExactPA}
\begin{barticle}[author]
\bauthor{\bsnm{Leonenko},~\bfnm{N.}\binits{N.}} \AND
  \bauthor{\bsnm{Woyczynski},~\bfnm{W.}\binits{W.}}
(\byear{1998}b).
\btitle{Exact parabolic asymptotics for singular $n$-{D} {Burgers'} random
  fields: Gaussian approximation}.
\bjournal{Stochastic Processes and Their Applications}
\bvolume{76}
\bpages{141 -- 165}.
\end{barticle}
\endbibitem

\bibitem[\protect\citeauthoryear{Leonenko and Woyczynski}{2001}]{LW}
\begin{barticle}[author]
\bauthor{\bsnm{Leonenko},~\bfnm{N.}\binits{N.}} \AND
  \bauthor{\bsnm{Woyczynski},~\bfnm{W.}\binits{W.}}
(\byear{2001}).
\btitle{Parameter identification for stochastic {B}urgers' flows via parabolic
  rescaling}.
\bjournal{Probability and Mathematical Statistics}
\bvolume{21}
\bpages{1 -- 55}.
\end{barticle}
\endbibitem

\bibitem[\protect\citeauthoryear{Liu and Shieh}{2011}]{10.1214/EJP.v16-896}
\begin{barticle}[author]
\bauthor{\bsnm{Liu},~\bfnm{G.~R.}\binits{G.~R.}} \AND
  \bauthor{\bsnm{Shieh},~\bfnm{N.~R.}\binits{N.~R.}}
(\byear{2011}).
\btitle{{Homogenization of fractional kinetic equations with random initial
  data}}.
\bjournal{Electronic Journal of Probability}
\bvolume{16}
\bpages{962 -- 980}.
\end{barticle}
\endbibitem

\bibitem[\protect\citeauthoryear{Mainardi}{2014}]{mainardi2014some}
\begin{barticle}[author]
\bauthor{\bsnm{Mainardi},~\bfnm{F.}\binits{F.}}
(\byear{2014}).
\btitle{On some properties of the Mittag-Leffler function $E_{\alpha}
  (-t^{\alpha})$, completely monotone for $t> 0$ with $0<\alpha< 1$}.
\bjournal{Discrete and Continuous Dynamical Systems - Series B}
\bvolume{19}.
\end{barticle}
\endbibitem

\bibitem[\protect\citeauthoryear{Maini and Nourdin}{2024}]{maini2024spectral}
\begin{barticle}[author]
\bauthor{\bsnm{Maini},~\bfnm{L.}\binits{L.}} \AND
  \bauthor{\bsnm{Nourdin},~\bfnm{I.}\binits{I.}}
(\byear{2024}).
\btitle{Spectral central limit theorem for additive functionals of isotropic
  and stationary {G}aussian fields}.
\bjournal{The Annals of Probability}
\bvolume{52}
\bpages{737 -- 763}.
\end{barticle}
\endbibitem

\bibitem[\protect\citeauthoryear{Malyarenko and Ostoja-Starzewski}{2019}]{M19}
\begin{bbook}[author]
\bauthor{\bsnm{Malyarenko},~\bfnm{A.}\binits{A.}} \AND
  \bauthor{\bsnm{Ostoja-Starzewski},~\bfnm{M.}\binits{M.}}
(\byear{2019}).
\btitle{Tensor-valued Random Fields for Continuum Physics}.
\bpublisher{Cambridge University Press}, \baddress{Cambridge}.
\end{bbook}
\endbibitem

\bibitem[\protect\citeauthoryear{Marinucci and Peccati}{2011}]{MP}
\begin{bbook}[author]
\bauthor{\bsnm{Marinucci},~\bfnm{D.}\binits{D.}} \AND
  \bauthor{\bsnm{Peccati},~\bfnm{G.}\binits{G.}}
(\byear{2011}).
\btitle{Random Fields on the Sphere: Representation, Limit Theorems and
  Cosmological Applications}.
\bpublisher{Cambridge University Press}, \baddress{Cambridge}.
\end{bbook}
\endbibitem

\bibitem[\protect\citeauthoryear{Meerschaert and
  Sikorskii}{2012}]{MeerschaertSikorskii}
\begin{bbook}[author]
\bauthor{\bsnm{Meerschaert},~\bfnm{M.}\binits{M.}} \AND
  \bauthor{\bsnm{Sikorskii},~\bfnm{A.}\binits{A.}}
(\byear{2012}).
\btitle{Stochastic Models for Fractional Calculus}.
\bpublisher{De Gruyter}, \baddress{Berlin}.
\end{bbook}
\endbibitem

\bibitem[\protect\citeauthoryear{Oh and Li}{2004}]{Oh}
\begin{barticle}[author]
\bauthor{\bsnm{Oh},~\bfnm{H.~S.}\binits{H.~S.}} \AND
  \bauthor{\bsnm{Li},~\bfnm{T.~H.}\binits{T.~H.}}
(\byear{2004}).
\btitle{Estimation of global temperature fields from scattered observations by
  a spherical-wavelet-based spatially adaptive method}.
\bjournal{Journal of the Royal Statistical Society: Series B (Statistical
  Methodology)}
\bvolume{66}
\bpages{221 -- 238}.
\end{barticle}
\endbibitem

\bibitem[\protect\citeauthoryear{Olenko}{2013}]{Olenko2013}
\begin{barticle}[author]
\bauthor{\bsnm{Olenko},~\bfnm{A.}\binits{A.}}
(\byear{2013}).
\btitle{Limit theorems for weighted functionals of cyclical long-range
  dependent random fields}.
\bjournal{Stochastic Analysis and Applications}
\bvolume{31}
\bpages{199 -- 213}.
\end{barticle}
\endbibitem

\bibitem[\protect\citeauthoryear{Olenko and Omari}{2020}]{Omari}
\begin{barticle}[author]
\bauthor{\bsnm{Olenko},~\bfnm{A.}\binits{A.}} \AND
  \bauthor{\bsnm{Omari},~\bfnm{D.}\binits{D.}}
(\byear{2020}).
\btitle{{Reduction principle for functionals of strong-weak dependent vector
  random fields}}.
\bjournal{Brazilian Journal of Probability and Statistics}
\bvolume{34}
\bpages{885 -- 908}.
\end{barticle}
\endbibitem

\bibitem[\protect\citeauthoryear{Olivera and Tudor}{2019}]{Olivera}
\begin{barticle}[author]
\bauthor{\bsnm{Olivera},~\bfnm{C.}\binits{C.}} \AND
  \bauthor{\bsnm{Tudor},~\bfnm{C.}\binits{C.}}
(\byear{2019}).
\btitle{{Density for solutions to stochastic differential equations with
  unbounded drift}}.
\bjournal{Brazilian Journal of Probability and Statistics}
\bvolume{33}
\bpages{520 -- 531}.
\end{barticle}
\endbibitem

\bibitem[\protect\citeauthoryear{Peccati and Taqqu}{2011}]{Peccati2011}
\begin{bbook}[author]
\bauthor{\bsnm{Peccati},~\bfnm{G.}\binits{G.}} \AND
  \bauthor{\bsnm{Taqqu},~\bfnm{M.}\binits{M.}}
(\byear{2011}).
\btitle{{Wiener Chaos: Moments, Cumulants and Diagrams: A Survey with Computer
  Implementation}}.
\bpublisher{Springer}, \baddress{Milano}.
\end{bbook}
\endbibitem

\bibitem[\protect\citeauthoryear{Podlubny}{1998}]{podlubny1998fractional}
\begin{bbook}[author]
\bauthor{\bsnm{Podlubny},~\bfnm{I.}\binits{I.}}
(\byear{1998}).
\btitle{Fractional Differential Equations: An Introduction to Fractional
  Derivatives, Fractional Differential Equations, to Methods of Their Solution
  and Some of Their Applications}.
\bpublisher{Academic Press}, \baddress{San Diego}.
\end{bbook}
\endbibitem

\bibitem[\protect\citeauthoryear{Qian
  et~al.}{2025}]{qian2025temporalregularitystochasticheat}
\begin{barticle}[author]
\bauthor{\bsnm{Qian},~\bfnm{Bin}\binits{B.}},
  \bauthor{\bsnm{Wang},~\bfnm{Min}\binits{M.}},
  \bauthor{\bsnm{Wang},~\bfnm{Ran}\binits{R.}} \AND
  \bauthor{\bsnm{Xiao},~\bfnm{Yimin}\binits{Y.}}
(\byear{2025}).
\btitle{Temporal regularity for the stochastic heat equation with rough
  dependence in space}.
\bjournal{arXiv}
\bvolume{2501.03864}.
\end{barticle}
\endbibitem

\bibitem[\protect\citeauthoryear{Rosenblatt}{1968}]{rosenblatt1968remarks}
\begin{barticle}[author]
\bauthor{\bsnm{Rosenblatt},~\bfnm{M.}\binits{M.}}
(\byear{1968}).
\btitle{Remarks on the {Burgers} equation}.
\bjournal{Journal of Mathematical Physics}
\bvolume{9}
\bpages{1129 -- 1136}.
\end{barticle}
\endbibitem

\bibitem[\protect\citeauthoryear{Ruiz-Medina, Angulo and
  Anh}{2001}]{ruiz2001scaling}
\begin{barticle}[author]
\bauthor{\bsnm{Ruiz-Medina},~\bfnm{M.}\binits{M.}},
  \bauthor{\bsnm{Angulo},~\bfnm{J.}\binits{J.}} \AND
  \bauthor{\bsnm{Anh},~\bfnm{V.}\binits{V.}}
(\byear{2001}).
\btitle{Scaling limit solution of a fractional {Burgers} equation}.
\bjournal{Stochastic Processes and Their Applications}
\bvolume{93}
\bpages{285 -- 300}.
\end{barticle}
\endbibitem

\bibitem[\protect\citeauthoryear{Simon}{2013}]{Simon2013ComparingFA}
\begin{barticle}[author]
\bauthor{\bsnm{Simon},~\bfnm{T.}\binits{T.}}
(\byear{2013}).
\btitle{Comparing Fr{\'e}chet and positive stable laws}.
\bjournal{Electronic Journal of Probability}
\bvolume{19}
\bpages{1 -- 25}.
\end{barticle}
\endbibitem

\bibitem[\protect\citeauthoryear{Taqqu}{1979}]{taqqu1979convergence}
\begin{barticle}[author]
\bauthor{\bsnm{Taqqu},~\bfnm{M.}\binits{M.}}
(\byear{1979}).
\btitle{Convergence of integrated processes of arbitrary Hermite rank}.
\bjournal{Zeitschrift f{\"u}r Wahrscheinlichkeitstheorie und Verwandte Gebiete}
\bvolume{50}
\bpages{53 -- 83}.
\end{barticle}
\endbibitem

\bibitem[\protect\citeauthoryear{Tavares, Almeida and
  Torres}{2016}]{tavares2016caputo}
\begin{barticle}[author]
\bauthor{\bsnm{Tavares},~\bfnm{D.}\binits{D.}},
  \bauthor{\bsnm{Almeida},~\bfnm{R.}\binits{R.}} \AND
  \bauthor{\bsnm{Torres},~\bfnm{D.~F.~M.}\binits{D.~F.~M.}}
(\byear{2016}).
\btitle{Caputo derivatives of fractional variable order: numerical
  approximations}.
\bjournal{Communications in Nonlinear Science and Numerical Simulation}
\bvolume{35}
\bpages{69 -- 87}.
\end{barticle}
\endbibitem

\bibitem[\protect\citeauthoryear{Ubøe and Zhang}{1995}]{uboe1995stability}
\begin{barticle}[author]
\bauthor{\bsnm{Ubøe},~\bfnm{J.}\binits{J.}} \AND
  \bauthor{\bsnm{Zhang},~\bfnm{T.}\binits{T.}}
(\byear{1995}).
\btitle{A stability property of the stochastic heat equation}.
\bjournal{Stochastic Processes and Their Applications}
\bvolume{60}
\bpages{247 -- 260}.
\end{barticle}
\endbibitem

\bibitem[\protect\citeauthoryear{Woyczynski}{1998}]{Woyczynski1998BurgersKPZT}
\begin{bbook}[author]
\bauthor{\bsnm{Woyczynski},~\bfnm{W.}\binits{W.}}
(\byear{1998}).
\btitle{Burgers-KPZ Turbulence}.
\bpublisher{Springer}, \baddress{Berlin}.
\end{bbook}
\endbibitem

\end{thebibliography}

\end{document}